\newcommand{\dc}{\backslash}
\newcommand{\sm}{\smallsetminus}
\newcommand{\mapp}[1]{\xrightarrow{#1}}
\newcommand{\Oo}{\mathcal{O}}
\newcommand{\CC}{\mathbb{C}}
\newcommand{\FF}{\mathbb{F}}
\newcommand{\ZZ}{\mathbb{Z}}
\newcommand{\QQ}{\mathbb{Q}}
\newcommand{\MM}{\text{\upshape M}}
\newcommand{\NN}{\text{\upshape N}}
\newcommand{\lqq}{\leqslant}
\newcommand{\gqq}{\geqslant}
\newcommand{\YYY}{\mathscr{Y}}
\newcommand{\ZZZ}{\mathscr{Z}}
\newcommand{\MMM}{\mathscr{M}}
\newcommand{\OOO}{\mathscr{O}}
\newcommand{\XXX}{\mathscr{X}}
\newcommand{\CCC}{\mathscr{C}}
\newcommand{\WWW}{\mathscr{W}}
\DeclareMathOperator{\length}{length}
\DeclareMathOperator{\Diff}{Diff}
\DeclareMathOperator{\id}{id}
\DeclareMathOperator{\Tr}{Tr}
\DeclareMathOperator{\Cl}{Cl}
\DeclareMathOperator{\Aut}{Aut}
\DeclareMathOperator{\End}{End}
\DeclareMathOperator{\Gal}{Gal}
\DeclareMathOperator{\Hom}{Hom}
\DeclareMathOperator{\disc}{disc}
\DeclareMathOperator{\Spec}{Spec}
\DeclareMathOperator{\Nrd}{Nrd}
\DeclareMathOperator{\charr}{char}
\DeclareMathOperator{\Lie}{Lie}
\DeclareMathOperator{\inv}{inv}
\DeclareMathOperator{\diag}{diag}
\DeclareMathOperator{\Def}{Def}
\newcommand{\frakD}{\mathfrak D}
\newcommand{\frakm}{\mathfrak m}
\newcommand{\fraka}{\mathfrak a}
\newcommand{\frakg}{\mathfrak g}
\newcommand{\frakG}{\mathfrak G}
\newcommand{\frakp}{\mathfrak p}
\newcommand{\frakq}{\mathfrak q}
\newcommand{\ord}{\mathrm{ord}}
\theoremstyle{plain}
\newtheorem{theorem}{Theorem}[section]
\newtheorem{corollary}[theorem]{Corollary}
\newtheorem{proposition}[theorem]{Proposition}
\theoremstyle{definition}
\newtheorem{definition}[theorem]{Definition}
\numberwithin{equation}{section}
\theoremstyle{plain}
\newtheorem*{theorem*}{Theorem}
\theoremstyle{plain}
\newtheorem*{theorem 1}{Theorem 1}
\theoremstyle{plain}
\newtheorem*{theorem 2}{Theorem 2}
\theoremstyle{plain}
\newtheorem*{theorem 3}{Theorem 3}
\theoremstyle{plain}
\newtheorem*{theorem 4}{Theorem 4}
\begin{document}
\title{Special endomorphisms of QM abelian surfaces}
\author{Andrew Phillips}
\maketitle

\begin{abstract}
In this paper we generalize a theorem of Kudla-Rapoport-Yang which gives a formula for the arithmetic
degree of the moduli space of CM elliptic curves together with a special endomorphism of a specified degree.
Our extension is to the moduli space of QM abelian surfaces with CM together with a special endomorphism
of a specified QM degree.
\end{abstract}

\section{Introduction}

Let $K$ be an imaginary quadratic
field with discriminant $d_K$, let $s$ be the number of distinct prime factors of $d_K$, and write 
$x \mapsto \overline{x}$ for the nontrivial element of $\Gal(K/\QQ)$. Let $e_p$ and $f_p$ be the ramification index and residue field degree of $K/\QQ$ at a prime $p$.

\subsection{Elliptic curves}
Let $\ZZZ$ be the algebraic stack (in the sense of \cite{Vistoli}) over $\Spec(\Oo_K)$ with fiber $\ZZZ(S)$
the category of pairs $(E, \kappa)$ where $E$ is an elliptic curve over the $\Oo_K$-scheme $S$ and
$\kappa : \Oo_K \to \End_S(E)$ is an action such that the induced map $\Oo_K \to \End_{\OOO_S}(\Lie(E)) \cong
\OOO_S(S)$ is the structure map. A \textit{special endomorphism} of an object $(E, \kappa)$ of $\ZZZ(S)$ is an
endomorphism $f \in \End_S(E)$ satisfying
$$
\kappa(x) \circ f = f \circ \kappa(\overline{x})
$$
for all $x \in \Oo_K$. For any positive integer $m$ let $\ZZZ_m$ be the algebraic stack over $\Spec(\Oo_K)$
with $\ZZZ_m(S)$ the category of triples $(E, \kappa, f)$ where $(E, \kappa)$ is an object of $\ZZZ(S)$
and $f \in \End_S(E)$ is a special endomorphism satisfying $\deg(f) = m$ on every connected component of $S$.
Define the \textit{arithmetic degree} of $\ZZZ_m$ to be
\begin{equation}\label{arithmetic degree II}
\deg(\ZZZ_m) = \sum_{\frakp \subset \Oo_K}\log(|\FF_\frakp|)\sum_{z \in [\ZZZ_m(\overline{\FF}_\frakp)]}
\text{length}(\OOO^{\text{sh}}_{\ZZZ_m, z}),
\end{equation}
where $[\ZZZ_m(\overline{\FF}_\frakp)]$ is the set of isomorphism classes of objects in $\ZZZ_m(\overline{\FF}_\frakp)$
and $\OOO^{\text{sh}}_{\ZZZ_m, z}$ is the strictly Henselian local ring of $\ZZZ_m$ at $z$. Also,
the outer sum is over all prime ideals $\frakp \subset \Oo_K$ and $\FF_\frakp = \Oo_K/\frakp$.

For each $m \in \ZZ^+$ define a nonempty finite set of prime numbers
$$
\Diff(m) = \{\ell < \infty : (d_K, -m)_\ell = -1\},
$$
where $(\cdot\hspace{.5mm}, \cdot)_\ell$ is the usual Hilbert symbol. For any positive integer $m$
let $\rho(m)$ be the number of ideals in $\Oo_K$ of norm $m$. For any prime $\ell$ let $\rho_\ell(m)$ be the number
of ideals in $\Oo_{K, \ell}$ of norm $m\ZZ_\ell$, so there is a product formula
$$
\rho(m) = \prod_\ell \rho_\ell(m).
$$
The following is \cite[Theorem 5.15]{KRY}.

\begin{theorem 1}[Kudla-Rapoport-Yang]
Let $m \in \ZZ^+$, suppose $\Diff(m) = \{p\}$ for some prime $p$, and assume $-d_K$ is prime. The stack $\ZZZ_m$ is of dimension zero, it is supported in characteristic $p$, and
$$
\deg(\ZZZ_m) = 2\log(p)\cdot \rho(mp^{e_p-2})\cdot(\ord_p(m) + 1).
$$
If $\#\Diff(m) > 1$ then $\deg(\ZZZ_m) = 0$.
\end{theorem 1}

\subsection{QM abelian surfaces}
Let $B$ be an indefinite quaternion algebra over $\QQ$, let
$\Oo_B$ be a maximal order of $B$, and let $d_B$ be the discriminant of $B$. 
We assume each prime dividing $d_B$ is inert in $K$, so in particular, $K$ splits $B$. Let $\YYY$ be
the algebraic stack over $\Spec(\Oo_K)$ with $\YYY(S)$ the category of triples $(A, i, \kappa)$ where $A$ is an abelian scheme of relative dimension $2$ over the $\Oo_K$-scheme $S$ and  
$$
i : \Oo_B \to \End_S(A), \quad
\kappa : \Oo_K \to \End_{\Oo_B}(A)
$$ 
are injective ring homomorphisms. In particular, $1 \mapsto \id_A$ in each of these maps and they have commuting images in $\End_S(A)$. The pair $(A, i)$ is called a \textit{QM abelian surface} and we call the triple $(A, i, \kappa)$ a \textit{CMQM abelian surface}.
Our convention is that the induced map
$\Oo_K \to \End_{\Oo_B}(\Lie(A))$ is through the structure map $\Oo_K \to \OOO_S(S)$ for any object
$A$ of $\YYY(S)$ (see \cite[\S 3]{P} for the basic theory of CMQM abelian surfaces). 
A \textit{special endomorphism} of an
object $(A, i, \kappa)$ of $\YYY(S)$ is an endomorphism $f \in \End_{\Oo_B}(A)$ satisfying
$$
\kappa(x) \circ f = f \circ \kappa(\overline{x})
$$
for all $x \in \Oo_K$. Note that this definition of special endomorphism is a bit more restrictive than what is traditionally called a ``special endomorphism" of a general QM abelian surface, for example in \cite[\S 3.4]{KRY}. See Section 1.4 for a comparison.

Any nonzero $f \in \End_{\Oo_B}(A)$ is an isogeny and over any connected $S$, there is a positive definite quadratic form $\deg^\ast : \End_{\Oo_B}(A) \to \ZZ$, which we call the \textit{QM degree}, satisfying $\deg^\ast(f)^2 = \deg(f)$ (see \cite[\S 2]{P}). 
For any positive integer $m$ let $\YYY_m$ be the algebraic stack over
$\Spec(\Oo_K)$ with $\YYY_m(S)$ the category of quadruples $(A, i, \kappa, f)$ where $(A, i, \kappa)$ is an object
of $\YYY(S)$ and $f \in \End_{\Oo_B}(A)$ is a special endomorphism satisfying $\deg^\ast(f) = m$
on every connected component of $S$. Define the \textit{arithmetic degree} of $\YYY_m$ just as in 
(\ref{arithmetic degree II}). For each $m \in \ZZ^+$ define a nonempty finite set of prime numbers
$$
\Diff_B(m) = \{\ell < \infty : (d_K, -m)_\ell\cdot\inv_\ell(B) = -1\},
$$
where $\inv_\ell(B)$ is the local invariant of $B$ at $\ell$ (it is $-1$ if $B$ is ramified at $\ell$ and $1$ otherwise).
For any prime $p$ set $\varepsilon_p = 1 - \ord_p(d_B)$ and let $r$ be the number of primes dividing $d_B$.
The following (Theorem \ref{final formula II} in the text) is our generalization of Theorem 1. This result
solves a problem posed in \cite[\S 6]{KRY}.

\begin{theorem 2}
Let $m \in \ZZ^+$ and suppose $\Diff_B(m) = \{p\}$. The stack $\YYY_m$ is of dimension zero, it is supported in
characteristic $p$, and
$$
\deg(\YYY_m) = 2^{r+s}\log(p)\cdot \rho(md_B^{-1}p^{(e_p-1)\varepsilon_p - 1})\cdot (\ord_p(md_K) + \varepsilon_pf_p - \varepsilon_p).
$$
If $\#\Diff_B(m) > 1$ then $\deg(\YYY_m) = 0$.
\end{theorem 2}

The proof of this theorem uses different ideas than those in \cite{KRY} and relies on the method developed in the proof of \cite[Theorem 2.27]{Howard}.

\subsection{Eisenstein series}
Theorem 1 is only half of the main result of \cite{KRY}, which is an equality relating $\deg(\ZZZ_m)$ with
the $m$-th Fourier coefficient of a certain Eisenstein series. We explain this result in this section.
Assume $q = -d_K$ is prime. For each place $\ell \lqq \infty$ of $\QQ$
define a character $\psi_\ell : \QQ_\ell^\times \to \{\pm 1\}$ by $\psi_\ell(x) = (x, d_K)_\ell$ 
and for any
$$
\gamma = \begin{bmatrix} a & b \\ c & d \end{bmatrix} \in \Gamma = \text{SL}_2(\ZZ)
$$
define
$$
\Phi^{-}(\gamma) = \left\{\begin{array}{ll}
\psi_q(a) & \text{if $q \mid c$} \\
-iq^{-1/2}\psi_q(c) & \text{if $q \nmid c$}.
\end{array} \right.
$$
For $\tau = u + iv$ in the complex upper half plane and $s \in \CC$ with $\text{Re}(s) > 1$ define
$$
E^\ast(\tau, s) = v^{s/2}q^{(s+1)/2}\pi^{-(s+2)/2}\Gamma\left(\frac{s+2}{2}\right)L(s, \psi_q)\sum_{\gamma \in
\Gamma_\infty\dc\Gamma}\frac{\Phi^{-}(\gamma)}{(c\tau + d)|c\tau + d|^s},
$$
where $\Gamma_\infty = \{\gamma \in \Gamma : c = 0\}$. This series has meromorphic continuation to all
$s \in \CC$ and defines a non-holomorphic modular form of weight $1$. It has a Fourier expansion
$$
E^\ast(\tau, s) = \sum_{m \in \ZZ}a_m(v, s)\cdot e^{2\pi im\tau}
$$
for some functions $a_m(v, s)$ holomorphic in a neighborhood of $s = 0$. The following is
\cite[Theorem 3]{KRY}.

\begin{theorem*}[Kudla-Rapoport-Yang]
Let $m \in \ZZ^+$ and assume $-d_K$ is prime. The derivative $a'_m = a'_m(v, 0)$ is independent of $v$ and 
$\deg(\ZZZ_m) = -a'_m$.
\end{theorem*}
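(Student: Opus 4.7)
The plan is to follow the deformation-theoretic strategy of \cite[Theorem 2.27]{Howard}, generalized from CM elliptic curves to QM abelian surfaces using the structure theory developed in \cite[\S 3]{P}. The argument splits into three steps: determining the support of $\YYY_m$ (and so handling the vanishing case), enumerating the isomorphism classes in $[\YYY_m(\overline{\FF}_p)]$, and computing the length of the strictly Henselian local ring at each such class. For the support, let $(A, i, \kappa, f)$ be a geometric point of $\YYY_m(\overline{k})$, and let $L \subset \End^0_{\Oo_B}(A)$ be the $\QQ$-subalgebra generated by $\kappa(K)$ and $f$. The relations $f^2 = -m$ (up to the normalization of $\deg^\ast$) and $\kappa(x)f = f\kappa(\overline{x})$ realize $L$ as a rational quaternion algebra with $\inv_\ell(L) = (d_K, -m)_\ell$. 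A double-centralizer argument inside $\End^0(A)$, together with the classification of $\End^0(A)$ (split in characteristic $0$; ramified at exactly $\{p, \infty\}$ if $A$ is supersingular in characteristic $p$), shows that $\inv_\ell(L)\cdot \inv_\ell(B) = -1$ can hold only at $\ell \in \{p, \infty\}$. Comparing with the definition of $\Diff_B(m)$ forces $\Diff_B(m) = \{p\}$ and $A$ supersingular at $p$; in particular $\YYY_m = \varnothing$ whenever $\#\Diff_B(m) > 1$.

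Assuming $\Diff_B(m) = \{p\}$, I enumerate $[\YYY_m(\overline{\FF}_p)]$ by first parametrizing supersingular QM abelian surfaces with $\Oo_K$-action via an adelic double coset for a torus attached to a CM algebra; this produces a genus-theoretic class-number factor of size $2^s$, and local orientations of $\Oo_B$ at its $r$ ramified primes contribute a factor $2^r$. Specifying a special endomorphism of QM degree $m$ then amounts to choosing an appropriate rank-one sublattice of the module of special endomorphisms of norm $m$, and a prime-by-prime count via the local theory of optimal $\Oo_K$-embeddings into orders of $B$ and into the special endomorphism algebra yields the ideal-counting factor $R(md_B^{-1}p^{(e_p-1)\varepsilon_p - 1})$. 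Away from $p$ the counting is standard; at $p$ the factor $p^{(e_p-1)\varepsilon_p-1}$ reflects the fact that one prime of $K$ above $p$ is absorbed into the supersingular locus.

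For the length of $\OOO^{\text{sh}}_{\YYY_m, z}$, Serre--Tate identifies it with the deformation ring of the $p$-divisible group $A[p^\infty]$ equipped with its $(\Oo_B \otimes \ZZ_p, \Oo_K \otimes \ZZ_p)$-actions and the endomorphism $f$. Using the displays and Grothendieck--Messing theory set up in \cite[\S 3]{P}, the deformations of $(A, i, \kappa)$ alone form a formal disk $\mathrm{Spf}(R_0)$ describable explicitly in terms of (quasi-)canonical lifts over the ring of integers of a local CM or ramified field, depending on whether $p \nmid d_B$ or $p \mid d_B$ and on the splitting type of $p$ in $K$. The locus where $f$ deforms is cut out by an ideal $I_f \subset R_0$; an explicit computation in Dieudonn\'e coordinates should identify $I_f$ as a power of the maximal ideal and yield $\length(R_0/I_f) = \ord_p(md_K) + \varepsilon_p f_p - \varepsilon_p$. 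Assembling the three steps and multiplying by $\log(p)$ gives the theorem.

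The main obstacle will be this last length computation in the case $p \mid d_B$ (so $\varepsilon_p = 0$), where the relevant $p$-divisible group is a special formal $\Oo_B \otimes \ZZ_p$-module rather than a classical Lubin--Tate group, so the computations of \cite{KRY} do not apply directly and a careful analysis using the ad hoc Dieudonn\'e theory from \cite[\S 3]{P} is required to identify $I_f$. A secondary subtlety is the case $e_p = 2$ (i.e.\ $p$ ramified in $K$), where the form of the length formula changes through $\ord_p(md_K)$ and one must verify that the counting and length computations still combine into the clean product above.
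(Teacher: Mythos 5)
There is a genuine gap: your proposal does not address the statement at hand. The theorem to be proved is the Kudla--Rapoport--Yang identity $\deg(\ZZZ_m) = -a'_m$, where $a'_m = a'_m(v,0)$ is the derivative at $s=0$ of the $m$-th Fourier coefficient of the weight-one Eisenstein series $E^\ast(\tau,s)$ attached to the character $\psi_q$ (with $q = -d_K$ prime), together with the assertion that $a'_m$ is independent of $v$. This statement concerns the stack $\ZZZ_m$ of CM elliptic curves with a special endomorphism of degree $m$, and it has two halves: a geometric half (the degree formula $\deg(\ZZZ_m) = 2\log(p)\cdot R(mp^{e_p-2})\cdot(\ord_p(m)+1)$ when $\Diff(m) = \{p\}$, and vanishing otherwise) and an analytic half (an explicit computation of $a_m(v,s)$ via local Whittaker functions, the verification that $a_m(v,0) = 0$ for the relevant $m$, the evaluation of $a'_m(v,0)$, and the check that the archimedean contribution does not introduce any $v$-dependence at $s=0$). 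Your sketch instead outlines the computation of $\deg(\YYY_m)$ for QM abelian surfaces --- essentially the paper's Theorem 2 / Theorem \ref{final formula II} --- via the support analysis through $\Diff_B(m)$, the orbital-integral count of $[\YYY_m(\overline{\FF}_\frakp)]$, and the deformation-length computation. None of that touches the Eisenstein series: nowhere do you compute or even mention the coefficients $a_m(v,s)$, their derivatives, or the matching of local factors on the analytic side with the counting and length factors on the geometric side, which is the entire content of the identity $\deg(\ZZZ_m) = -a'_m$.

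Note also that in this paper the statement is quoted from \cite[Theorem 3]{KRY} rather than reproved (the author explicitly declines to pursue the Eisenstein-series analogue for $\YYY_m$), so even as background the relevant comparison is with the analytic computations in \cite{KRY}, not with the deformation-theoretic method of \cite{Howard} that you describe. If you intend to prove the stated theorem, you would need to (i) establish the degree formula for $\ZZZ_m$ (not $\YYY_m$), for instance by the method you outline but specialized to elliptic curves with $-d_K$ prime, and (ii) carry out the Fourier expansion of $E^\ast(\tau,s)$, factoring $a_m(v,s)$ into local Whittaker integrals, identifying which local factor vanishes at $s=0$ (the one at the prime in $\Diff(m)$), and computing its derivative to produce exactly $2\log(p)\cdot R(mp^{e_p-2})\cdot(\ord_p(m)+1)$ with no residual dependence on $v$. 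As written, the proposal proves a different theorem.
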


Most likely there is a similar theorem for the stack $\YYY_m$, but we do not pursue that direction here.
There is a similar formula relating the degree of a modified stack $\YYY'_m$ of CMQM abelian surfaces and Fourier coefficients of a modular form, described in the next section.

\subsection{Related work}

In this section we will describe the connection between Theorem 2 above and a related, but independent, result of Kudla-Yang in \cite{KY}. To explain this, let $\MMM$ be the algebraic stack, regular and flat of relative dimension $1$ over $\Spec(\Oo_K)$, with $\MMM(S)$ the category of QM abelian surfaces over $S$ satisfying the Kottwitz condition of \cite[(1.4)]{P}. Any object of $\YYY(S)$ automatically satisfies the Kottwitz condition, so there is a forgetful morphism $\YYY \to \MMM$. For  
$(A, i) \in \MMM(S)$, let 
$$
L(A, i) = \{f \in \End_{\Oo_B}(A) : \Tr(f) = 0\}.
$$
Elements of $L(A, i)$ are traditionally called ``special endomorphisms", but for $(A, i, \kappa) \in \YYY(S)$, we will see below that not every element of $L(A, i)$ is a special endomorphism in the sense of Section 1.2.
If $S$ is connected, $(A, i) \in \MMM(S)$, and $f \in L(A, i)$ is nonzero, then $f \circ f$ is a negative integer. Indeed, for any geometric point $\overline{s}$ of $S$, the natural map $\End_{\Oo_B}(A) \to \End_{\Oo_B}(A_{\overline{s}})$ is injective by the rigidity lemma for abelian schemes. By \cite[Corollary 2.6]{P}, the ring $\End_{\Oo_B}(A_{\overline{s}})$ is an order in $\QQ$, an imaginary quadratic field, or a definite quaternion algebra over $\QQ$, and the claim is clear in each of these cases. There is an induced quadratic form $Q : L(A, i) \to \ZZ$ defined by $Q(f)\cdot\id_A = -(f\circ f)$. For $m \in \ZZ^+$, let $\XXX(m)$ be the algebraic stack over $\Spec(\Oo_K)$ with $\XXX(m)(S)$ the category of triples $(A, i, f)$ where $(A, i) \in \MMM(S)$ and
$f \in L(A, i)$ satisfies $Q(f) = m$ on every connected component of $S$.

Let $\theta : \Oo_K \to \Oo_B$ be an embedding of rings and let $M_\theta$ be the $\ZZ$-module $\Oo_B$, viewed as a right $\Oo_B$-module via right multiplication and a left $\Oo_K$-module via $\theta$ and left multiplication. With $\ZZZ$ as in Section 1.1, there is a morphism of stacks 
$$
j_{M_\theta} : \ZZZ \to \MMM, \quad (E, \kappa_0) \mapsto (M_\theta \otimes_{\Oo_K} E, i_{M_\theta}),
$$
where $i_{M_\theta}$ is induced by the action of $\Oo_B$ on $M_\theta$. Note that $j_{M_\theta}$
actually has image in $\YYY$ since $j_{M_{\theta}}(E, \kappa_0)$ carries the $\Oo_K$-action
$\kappa(a) = \id \otimes \kappa_0(a)$. For $m \in \ZZ^+$, define 
$$
j^{\ast}_{M_\theta}(\XXX(m)) = \ZZZ \times_{j_{M_\theta}, \MMM} \XXX(m),
$$
which is a stack over $\Spec(\Oo_K)$, whose fiber over $S$ has objects $(E, \kappa_0, A, i, f)$, where
$(E, \kappa_0) \in \ZZZ(S)$, $(A, i, f) \in \XXX(m)(S)$, and $(A, i) \cong j_{M_\theta}(E, \kappa_0)$ in
$\MMM(S)$. 

Viewing $j^{\ast}_{M_\theta}(\XXX(m))$ as a divisor on $\ZZZ$ through the first projection, the main result of \cite{KY} gives a formula for $\deg(j^{\ast}_{M_\theta}(\XXX(m)))$ in terms of coefficients of a modular form of weight $\frac{3}{2}$. More precisely, for $\tau$ in the complex upper half plane and $s \in \CC$, there is an Eisenstein series $E^\ast(\tau, s; \fraka, \lambda, r)$ of weight $1$ depending on
a fractional $\Oo_K$-ideal $\fraka$, determined by the embedding $\theta$, an element $\lambda \in \frakD^{-1}\fraka/\fraka$, where $\frakD$ is the different ideal of $K/\QQ$, and an element $r \in \frakD^{-1}/\Oo_K$. Also, for $r \in \frakD^{-1}/\Oo_K$, define
$$
\Theta(\tau; r) = \sum_{\substack{\alpha \in \frakD^{-1}, \Tr_{K/\QQ}(\alpha) = 0 \\ \alpha \equiv r \bmod \Oo_K}} e^{2\pi i\tau\NN_{K/\QQ}(\alpha)}.
$$
Assuming $(d_K, d_B) = 1$ and $d_K$ is odd, by \cite[Main Theorem A, B]{KY}, $\deg(j^{\ast}_{M_\theta}(\XXX(m)))$ is the coefficient of $e^{2\pi im\tau}$ in the modular form
$$
-\frac{1}{2}\sum_{\substack{r \in \frakD^{-1}/\Oo_K \\ \Tr_{K/\QQ}(r) = 0}}\Theta(\tau; r)(E^{\ast})'(d_B\tau, 0; \fraka, \lambda', r),
$$ 
where $\lambda'$ is a certain ``twist" of $\lambda$.
Then using ideas similar to those in \cite[\S 4]{YY}, it seems likely there is an explicit formula for $\deg(j^{\ast}_{M_\theta}(\XXX(m)))$ similar to that for $\deg(\YYY_m)$ given in Theorem 2.

Now we consider the relationship between the stacks $\YYY_m$ and $j^{\ast}_{M_\theta}(\XXX(m))$.
Let $\frakm_B \subset \Oo_B$ be the unique ideal of index $d_B^2$ and for $(A, i) \in \MMM(S)$, let $A[\frakm_B] = \bigcap_{x \in \frakm_B}\ker(i(x))$ be the $\frakm_B$-torsion group scheme. If $\widetilde{\theta} : \Oo_K \to \Oo_B/\frakm_B$ is the reduction of $\theta$, define an algebraic stack $\YYY(\widetilde{\theta})$ over $\Spec(\Oo_K)$ where $\YYY(\widetilde{\theta})(S)$ is the full subcategory of $\YYY(S)$ whose objects $(A, i, \kappa)$ are such that the diagram
$$
\xymatrix{
\Oo_K \ar[rr]^>>>>>>>>>>>>>>>>{\kappa^{\frakm_B}} \ar[dr]_{\widetilde{\theta}} && \End_{\Oo_B/\frakm_B}(A[\frakm_B]) \\
& \Oo_B/\frakm_B \ar[ur]_{i^{\frakm_B}} & }
$$
commutes. By \cite[Theorem 3.19]{P}, the morphism $j_{M_\theta}$ defines an isomorphism of stacks
$\ZZZ \to \YYY(\widetilde{\theta})$.

To interpret $j^{\ast}_{M_\theta}(\XXX(m))$ as a space similar to $\YYY_m$, define $\YYY_m'$ to be the algebraic stack over $\Spec(\Oo_K)$ where $\YYY_m'(S)$ is the category of quadruples $(A, i, \kappa, f)$ with $(A, i, \kappa) \in \YYY(S)$ and $f \in L(A, i)$ satisfying $Q(f) = m$ on every connected component of $S$. Also, let $\YYY_m'(\widetilde{\theta})$ be the algebraic stack over $\Spec(\Oo_K)$ where $\YYY_m'(\widetilde{\theta})(S)$ is the full subcategory of $\YYY_m'(S)$ with 
$(A, i, \kappa) \in \YYY(\widetilde{\theta})(S)$. It follows from \cite[Theorem 3.19]{P} that the inclusion functor 
$$
\bigsqcup_{\eta : \Oo_K \to \Oo_B/\frakm_B}\YYY_m'(\eta) \to \YYY_m'
$$
is an isomorphism of stacks. Now define a functor
$$
\Phi : j^{\ast}_{M_\theta}(\XXX(m)) \to \YYY_m'(\widetilde{\theta}), \quad (E, \kappa_0, A, i, f)
\mapsto (M_\theta \otimes_{\Oo_K} E, i_{M_\theta}, \id_{M_\theta} \otimes \kappa_0, f'),
$$
where $f'$ is the endomorphism of $M_\theta \otimes_{\Oo_K} E$ induced by $f$ and the isomorphism
$A \cong M_\theta \otimes_{\Oo_K} E$. Using that $j_{M_\theta} : \ZZZ \to \YYY(\widetilde{\theta})$ is an equivalence, it is straightforward to show $\Phi$ is an equivalence, so the main result of \cite{KY} gives a formula for
$$
\deg(\YYY_m') = \sum_{\eta : \Oo_K \to \Oo_B/\frakm_B}\deg(\YYY_m'(\eta)).
$$

Finally, to relate $\YYY_m$ and $\YYY_m'$, we claim that
if $(A, i, \kappa) \in \YYY(S)$ for some $\Oo_K$-scheme $S$ and $f \in \End_{\Oo_B}(A)$ is a special endomorphism, as defined in Section 1.2, then $f \in L(A, i)$. To show this, it suffices to assume $S = \Spec(k)$ with $k$ an algebraically closed field. If $\charr(k) = 0$ or $\charr(k) > 0$ and $\End_{\Oo_B}(A)$ is an order in $K$, then $f = 0$ by Proposition \ref{special endo} below. If $\End_{\Oo_B}(A)$ is an order in a definite quaternion algebra, see the proof of Proposition \ref{diff}. 
However, there are elements of $L(A, i)$ that are not special endomorphisms in our terminology; for example, $\kappa(a)$ for any nonzero $a \in \Oo_K$ satisfying $\Tr_{K/\QQ}(a) = 0$. Also, $Q(f) = \deg^\ast(f)$ for any special endomorphism $f$ by the proof of Proposition \ref{diff}, so $\YYY_m$ is a full subcategory of $\YYY_m'$, but in general they are not equal. To see that their arithmetic degrees may be different, let
$K = \QQ(\sqrt{D})$ with $D < 0$ squarefree and choose
$m \in \ZZ^+$ such that $-m/D = a^2$ is a square in $\ZZ$. Then for $(A, i, \kappa) \in \YYY(\overline{\FF}_\frakp)$, the quadruple $(A, i, \kappa, \kappa(a\sqrt{D}))$ is an object of
$\YYY_m'(\overline{\FF}_\frakp)$ but not $\YYY_m(\overline{\FF}_\frakp)$.
The method used in this paper to compute $\deg(\YYY_m)$ relies crucially on the $\ZZ$-module of special endomorphisms of an object $(A, i, \kappa)$  of $\YYY$
being stable under the action of $\Oo_K$ on $\End_{\Oo_B}(A)$. The same is not true of the $\ZZ$-module $L(A, i)$.

\subsection{Notation}  
If $\CCC$ is a category, we write $C \in \CCC$ to mean $C$ is an object of $\CCC$.
We use $\Delta$ to denote the maximal order in the unique quaternion division algebra over $\QQ_p$ and $\frakg$ for the unique connected $p$-divisible group of height 2 and dimension 1 over $\overline{\FF}_p$, so $\Delta = \End_{\overline{\FF}_p}(\frakg)$. For any number field $L$, we write $\widehat{L} = L \otimes_{\QQ} \widehat{\QQ}$ for the ring of finite adeles over $L$ and $\Cl(\Oo_L)$ for the ideal class group of $L$. If $M$ is a $\ZZ$-module and $V$ a $\QQ$-vector space, let $\widehat{M} = M \otimes_{\ZZ} \widehat{\ZZ}$ and $\widehat{V} = 
V \otimes_{\QQ} \widehat{\QQ}$. If $A \to S$ is an abelian scheme, we write $\End_S^0(A)$ for $\End_S(A) \otimes_{\ZZ}\QQ$. We assume each prime dividing $d_B$ is inert in $K$.

\subsection{Acknowledgment}
This research forms part of my Boston College Ph.D. thesis. I would like to thank my advisor Ben Howard for helping with the work in \cite{P}, which forms the technical basis for this paper. I would also like to thank the anonymous referee for pointing out the results of Kudla-Yang in \cite{KY}, as well as Tonghai Yang for a helpful explanation of the connection between the results of \cite{KY} and those of this paper.

\section{Moduli spaces}

We continue with the same notation as in Sections 1.1-1.2. In the following definitions, we will mostly suppress the map $i : \Oo_B \to \End_S(A)$ from the notation for simplicity.

\begin{definition}
Define $\YYY$ to be the category whose objects are triples $(A, i, \kappa)$ where $(A, i)$ is a QM abelian surface over some
$\Oo_K$-scheme with complex multiplication $\kappa : \Oo_K \to \End_{\Oo_B}(A)$. In particular, if $A$ is defined over $S$, we require that the induced map $\Oo_K \to \End_{\Oo_B}(\Lie(A))$ is through the structure map $\Oo_K \to \OOO_S(S)$. 
A morphism $(A', i', \kappa') \to (A, i, \kappa)$
between two such triples defined over $\Oo_K$-schemes $T$ and $S$, respectively, is a morphism of $\Oo_K$-schemes
$T \to S$ together with an $\Oo_K$-linear isomorphism $A' \to A \times_S T$ of QM abelian surfaces.
\end{definition}

\begin{definition}
Let $(A, i, \kappa) \in \YYY(S)$ for some $\Oo_K$-scheme $S$. A \textit{special endomorphism} of $(A, \kappa)$
is an endomorphism $f \in \End_{\Oo_B}(A)$ satisfying
$$
\kappa(x) \circ f = f \circ \kappa(\overline{x})
$$
for all $x \in \Oo_K$. Let $L(A, \kappa)$ be the $\ZZ$-module of all special endomorphisms of $(A, \kappa)$ and set $V(A, \kappa) = L(A, \kappa)
\otimes_{\ZZ} \QQ$.
\end{definition}

We make $L(A, \kappa)$ into a left $\Oo_K$-module through the action $x \cdot f = \kappa(x) \circ f$. For connected $S$, there is the $\ZZ$-valued quadratic
form $\deg^\ast$ on $L(A, \kappa)$ and this satisfies
$$ 
\deg^\ast(x\cdot f) = \NN_{K/\QQ}(x)\cdot \deg^\ast(f)
$$
for all $x \in \Oo_K$ (\cite[Lemma 3.3]{P}).

\begin{definition}
For any positive integer $m$, define $\YYY_m$ to be the category whose objects are triples $(A, \kappa, f)$ where
$(A, i, \kappa) \in \YYY(S)$ for some $\Oo_K$-scheme $S$ and $f \in L(A, \kappa)$ satisfies $\deg^\ast(f) = m$
on every connected component of $S$. A morphism
$$
(A', \kappa', f') \to (A, \kappa, f)
$$
between two such triples, with $(A', i', \kappa')$ and $(A, i, \kappa)$ QM abelian surfaces with CM over $\Oo_K$-schemes $T$ and
$S$, respectively, is a morphism of $\Oo_K$-schemes $T \to S$ together with an $\Oo_K$-linear isomorphism $A' \to A \times_S T$
of QM abelian surfaces compatible with $f$ and $f'$.
\end{definition}

The categories $\YYY$ and $\YYY_m$ are algebraic stacks of finite type over $\Spec(\Oo_K)$, with $\YYY$ finite \'etale
over $\Spec(\Oo_K)$.
It is shown in \cite[\S 3]{P} that for any prime $\frakp \subset \Oo_K$, the group $W_B \times \Cl(\Oo_K)$ acts simply transitively on $[\YYY(\overline{\FF}_\frakp)]$, the set of isomorphism classes of objects of
$\YYY(\overline{\FF}_\frakp)$, where $W_B$ is the Atkin-Lehner group of $\Oo_B$, and that for any $A \in \YYY(\overline{\FF}_\frakp)$,
there is an isomorphism of CMQM abelian surfaces $A \cong M \otimes_{\Oo_K} E$ for some $\Oo_B \otimes_{\ZZ} \Oo_K$-module $M$, free of rank $4$ over $\ZZ$, and some elliptic curve $E$ over
$\overline{\FF}_\frakp$ with CM by $\Oo_K$ (supersingular in the case of the prime below $\frakp$ nonsplit in $K$). Here, and in the remainder of the paper, $\FF_\frakp = \Oo_K/\frakp$ and $\Spec(\overline{\FF}_\frakp)$ is an $\Oo_K$-scheme through the reduction map $\Oo_K \to \Oo_K/\frakp \hookrightarrow \overline{\FF}_\frakp$.

For each prime number $p$, define $B^{(p)}$ to be the quaternion division algebra over $\QQ$ determined
by 
$$
\inv_\ell(B^{(p)}) = \left\{\begin{array}{ll}
\inv_\ell(B) & \text{if $\ell \notin \{p, \infty\}$} \\
-\inv_\ell(B) & \text{if $\ell \in \{p, \infty\}$}.
\end{array} \right.
$$
If $(A, \kappa) \in  \YYY(k)$ with $k$ a field of characteristic $p$, we say $A$ is \textit{supersingular} if the underlying abelian surface $A$ is supersingular, that is, $A$ is isogenous to $E^2$ for some supersingular elliptic curve $E$ over $k$.

\begin{proposition}\label{special endo}
Let $(A, \kappa) \in \YYY(k)$ where $k$ is an algebraically closed field together with a ring homomorphism $\Oo_K \to k$. \\
{\upshape(a)} If $\charr(k) = 0$ then $V(A, \kappa) = 0$. \\
{\upshape(b)} If $\charr(k) > 0$
then 
$$
\dim_K(V(A, \kappa)) = \left\{\begin{array}{ll}
1 & \text{if $A$ is supersingular} \\
0 & \text{otherwise}.
\end{array} \right.
$$
\end{proposition}

\begin{proof}
(a) In this case, $\End_{\Oo_B}(A)$ is an order in $K$ (\cite[Corollary 2.6]{P}), so $\kappa : \Oo_K \to \End_{\Oo_B}(A)$ must be an isomorphism. It follows that
$L(A, \kappa) = 0$. 

(b) Now suppose $\charr(k) = p$. Note that $\MM_2(K) \cong B \otimes_{\QQ} K \hookrightarrow \End^0_k(A)$, so $A$ is not simple and hence $A \sim E^2$ for some elliptic curve $E$ over $k$ (\cite[Proposition 2.4]{P}). If $E$ is ordinary then $\End^0_{\Oo_B}(A) \cong K$ and $L(A, \kappa) = 0$ as above.
If $E$ is supersingular then $\End^0_k(A) \cong \MM_2(B_{p, \infty})$, where $B_{p, \infty}$ is the quaternion algebra over $\QQ$ ramified at $p$ and $\infty$. Hence
$$
16 = \dim_{\QQ}(\MM_2(B_{p, \infty})) = \dim_{\QQ}(B)\dim_{\QQ}(\End^0_{\Oo_B}(A))
$$
by \cite[Proposition 7.7.8]{Voight}, which implies $\End^0_{\Oo_B}(A) \cong B^{(p)}$ by \cite[Corollary 2.6]{P}. As $K$ is a simple $\QQ$-algebra
and $B^{(p)}$ is a central simple $\QQ$-algebra, by the Skolem-Noether theorem applied to the two maps
$K \to B^{(p)}$ given by $x \mapsto \kappa(x)$ and $x \mapsto \kappa(\overline{x})$, there is an $f \in (B^{(p)})^\times$
such that $\kappa(x) = f \circ \kappa(\overline{x}) \circ f^{-1}$ for all $x \in K$. This means $f \in V(A, \kappa)$, so
$\dim_K(V(A, \kappa)) \gqq 1$. However, the $K$-subspaces $\kappa(K)$ and $V(A, \kappa)$ in $B^{(p)}$ intersect trivially, so
$B^{(p)} = \kappa(K) \oplus V(A, \kappa)$ and $\dim_K(V(A, \kappa)) = 1$.
\end{proof}

For each place $\ell \lqq \infty$ of $\QQ$ let $(\cdot\hspace{.5mm}, \cdot)_\ell : \QQ_\ell^\times \times \QQ_\ell^\times \to
\{\pm 1\}$ be the Hilbert symbol. For each positive integer $m$ define a finite set of prime numbers
$$
\Diff_B(m) = \{\ell < \infty : (d_K, -m)_\ell \cdot \inv_\ell(B) = -1\}.
$$
From the product formula
$$
\prod_{\ell \lqq \infty}(d_K, -m)_\ell\cdot\inv_\ell(B) = 1
$$
and $(d_K, -m)_\infty\cdot\inv_\infty(B) = -1$, it follows that $\Diff_B(m)$ has odd
cardinality. If $\ell$ is a prime number split in $K$ then $\ell \nmid d_B$ by assumption and
$$
\QQ_\ell(\sqrt{d_K}) \cong K \otimes_\QQ \QQ_\ell \cong \QQ_\ell \times \QQ_\ell,
$$
so $-m$ is a norm from $\QQ_\ell(\sqrt{d_K})$
and thus $(d_K, -m)_\ell = 1$. Hence $(d_K, -m)_\ell\cdot\inv_\ell(B) = 1$, which shows $\ell \notin \Diff_B(m)$ if $\ell$
is split in $K$.

\begin{proposition}\label{diff}
Let $\frakp \subset \Oo_K$ be a prime lying over a prime $p$. If $\YYY_m(\overline{\FF}_\frakp) \neq \varnothing$ then
$\Diff_B(m) = \{p\}$.
\end{proposition}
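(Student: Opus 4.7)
The plan is to extract from an object $(A, \kappa, f) \in \YYY_m(\overline{\FF}_\frakp)$ a Hilbert symbol presentation of $B^{(p)}$, and then compare local invariants with $B$ place by place.

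Since $\deg^*(f) = m > 0$ forces $f$ to be nonzero, the previous proposition implies $A$ is supersingular, $\End^0_{\Oo_B}(A) \cong B^{(p)}$, and we have the $K$-decomposition $B^{(p)} = \kappa(K) \oplus V(A, \kappa)$ with $V(A, \kappa)$ of $K$-dimension one. I would first check that the main involution $\iota$ of $B^{(p)}$ preserves $V(A, \kappa)$ (immediate from applying $\iota$ to the defining relation $\kappa(x) f = f \kappa(\overline{x})$) and that $V(A, \kappa) \cap \QQ = 0$. Together these force $\Trd(f) = f + \iota(f) = 0$, hence $f^2 = -\Nrd(f)$ inside $B^{(p)}$. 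Identifying $\deg^*$ on $V(A, \kappa)$ with the restriction of the reduced norm of $B^{(p)}$ (in dimension two, $\deg = \Nrd^2$, and $\deg^*$ is the positive square root, meaningful since $B^{(p)}$ is definite), this gives $f^2 = -m$. Fixing $\delta \in K$ with $\delta^2 = d_K$ and noting $\kappa(\delta) f = -f \kappa(\delta)$, the relations $\kappa(\delta)^2 = d_K$, $f^2 = -m$, $f \kappa(\delta) = -\kappa(\delta) f$ identify
$$
B^{(p)} \cong (d_K, -m)_\QQ,
$$
so $\inv_\ell(B^{(p)}) = (d_K, -m)_\ell$ at every place $\ell \lqq \infty$.

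The conclusion then follows from the definition of $B^{(p)}$: one has $\inv_\ell(B^{(p)}) = \inv_\ell(B)$ for finite $\ell \neq p$ and $\inv_p(B^{(p)}) = -\inv_p(B)$, giving $(d_K, -m)_\ell \cdot \inv_\ell(B) = 1$ at all finite $\ell \neq p$ while $(d_K, -m)_p \cdot \inv_p(B) = -1$. That is exactly $\Diff_B(m) = \{p\}$. The main subtlety requiring attention is the identification $\deg^* = \Nrd$ on $V(A, \kappa)$, which depends on the normalization of the QM degree in \cite[Definition 2.9]{P}; everything else is linear algebra inside $B^{(p)}$ and standard manipulations with Hilbert symbols.
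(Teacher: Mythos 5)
Your proposal is correct and follows essentially the same route as the paper: both arguments produce the presentation $B^{(p)} \cong \left(\frac{d_K,\,-m}{\QQ}\right)$ from the relations $\delta^2 = d_K$, $f^2 = -m$, $\delta f = -f\delta$ and then compare local invariants with those of $B$ using the definition of $B^{(p)}$. The only (harmless) variation is how you get $f^2 = -m$: you show $\Trd(f)=0$ via the main involution preserving $V(A,\kappa)$ and $V(A,\kappa)\cap\QQ=0$, together with the identification $\deg^\ast=\Nrd$ (which the paper also uses elsewhere), whereas the paper argues with the dual isogeny $f^t$, the Rosati involution, and the specialness relation to get $f+f^t=0$ and then $m=\deg^\ast(f)=f\circ f^t=-f^2$ directly.
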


\begin{proof}
Fix $(A, \kappa, f) \in \YYY_m(\overline{\FF}_\frakp)$. View $K$ as a $\QQ$-subalgebra of $B^{(p)}$ via $\kappa : K \to B^{(p)}$
and consider the element $f + f^t \in B^{(p)}$, where $f^t$ is the dual isogeny to $f$ (see \cite[\S 2]{P}).
By definition, $f^t = \lambda^{-1} \circ f^\vee \circ \lambda$, where $\lambda : A \to A^\vee$
is the canonical principal polarization on a QM abelian surface, so $f^t = f^\dagger$ where $g \mapsto g^\dagger$ is the Rosati involution on $\End_{\Oo_B}^0(A)$
corresponding to $\lambda$. Since $f + f^t$ is fixed by the Rosati involution, which corresponds to the main involution on the definite quaternion algebra $B^{(p)}$, we have $f + f^t \in \ZZ \subset \End_{\Oo_B}(A)$.
However, as $f$ is a special endomorphism, for any $x \in K$, 
$$
x(f + f^t) = xf + (\overline{x})^tf^t = f\overline{x} + (xf)^t = (f + f^t)\overline{x},
$$
where we are using $x^t = \overline{x}$ (\cite[Lemma 3.3]{P}), so from $f + f^t \in \ZZ$, it follows that $f + f^t = 0$. Hence
$$
m = \deg^\ast(f) = f \circ f^t = -f^2.
$$
Setting $\delta = \sqrt{d_K} \in K \subset B^{(p)}$, the $\QQ$-algebra $B^{(p)}$ is generated by elements $\delta, f$ satisfying
$$
\delta^2 = d_K, \quad f^2 = -m, \quad \delta f = -f\delta,
$$
the last relation coming from $\overline{\delta} = -\delta$, so
$$
B^{(p)} \cong \left(\frac{d_K, -m}{\QQ}\right).
$$
Therefore
$$
(d_K, -m)_\ell\cdot\inv_\ell(B) = \inv_\ell(B^{(p)})\cdot\inv_\ell(B) = \left\{\begin{array}{ll}
1 & \text{if $\ell \neq p, \infty$} \\
-1 & \text{if $\ell = p, \infty$},
\end{array} \right.
$$
which means $\Diff_B(m) = \{p\}$. 
\end{proof}

\begin{corollary}
If $\Diff_B(m) = \{p\}$ then there is a unique prime ideal $\frakp \subset \Oo_K$ over $p$ and $\YYY_m(\overline{\FF}_\frakq) = \varnothing$
for every prime $\frakq \neq \frakp$. If $\#\Diff_B(m) > 1$ then $\YYY_m = \varnothing$.
\end{corollary}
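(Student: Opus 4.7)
The plan is to deduce everything directly from the preceding proposition, combined with two observations already recorded in the text: first, the remark immediately following the definition of $\Diff_B(m)$ that no prime split in $K$ can belong to $\Diff_B(m)$; second, the first proposition of this section, which shows $V(A, \kappa) = 0$ for any $(A, \kappa) \in \YYY(\CC)$. There is no real obstacle here—the statement is essentially the contrapositive of the preceding proposition, packaged with a descent from geometric points to the stack itself.

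For the first claim, I would suppose $\Diff_B(m) = \{p\}$. By the observation just recalled, $p$ must be non-split in $K$, hence inert or ramified, so there is a unique prime $\frakp \subset \Oo_K$ above $p$. Now let $\frakq \neq \frakp$ be any other prime of $\Oo_K$ and write $\frakq \cap \ZZ = q\ZZ$; since $\frakp$ is the only prime of $\Oo_K$ lying over $p$, necessarily $q \neq p$. If $\YYY_m(\overline{\FF}_\frakq)$ were nonempty, the preceding proposition would yield $\Diff_B(m) = \{q\}$, contradicting $\Diff_B(m) = \{p\}$. Hence $\YYY_m(\overline{\FF}_\frakq) = \EM$.

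For the second claim, assume $\#\Diff_B(m) > 1$. Since $\YYY_m$ is of finite type over $\Spec(\Oo_K)$, it is empty if and only if it has no geometric points. A geometric point lies either over the generic point of $\Spec(\Oo_K)$ or over some closed point. In the generic case one may replace it by a $\CC$-point, but the first proposition of this section gives $V(A, \kappa) = 0$, so there is no special endomorphism of positive degree $m$. In the closed case one obtains an $\overline{\FF}_\frakp$-point for some prime $\frakp \subset \Oo_K$, and the preceding proposition forces $\Diff_B(m) = \{p\}$, contradicting $\#\Diff_B(m) > 1$. Both possibilities being ruled out, $\YYY_m = \EM$.

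The only mild subtlety is the last reduction—that a finite-type stack over $\Spec(\Oo_K)$ with no $\overline{F}$-points for any algebraically closed residue or fraction field is empty—but this is standard, since any nonempty finite-type stack over $\Spec(\Oo_K)$ has a point valued in an algebraically closed field over either the generic point or some closed point of the base.
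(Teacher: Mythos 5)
Your proposal is correct and follows essentially the same route as the paper: the first claim is the contrapositive packaging of the preceding proposition together with the earlier remark that split primes cannot lie in $\Diff_B(m)$, and the second claim is handled by ruling out geometric points in every characteristic. You are in fact a bit more explicit than the paper about the $\#\Diff_B(m) > 1$ case (invoking the vanishing of $V(A,\kappa)$ over $\CC$ and the reduction of emptiness to the absence of geometric points), which the paper leaves implicit.
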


\begin{proof}
If $\YYY_m(\overline{\FF}_\frakq) \neq \varnothing$ then $\Diff_B(m) = \{q\}$ where $q\ZZ = \frakq \cap \ZZ$. Hence
$p = q$ and then $\frakp = \frakq$ since $p$ and $q$ are nonsplit in $K$.
\end{proof}

\section{Local quadratic spaces}
Let $m$ be a positive integer, $p$ a prime nonsplit in $K$, $\frakp \subset \Oo_K$ the prime over $p$, and suppose $(A, \kappa) \in \YYY(\overline{\FF}_\frakp)$ is supersingular. For each prime $\ell$ set 
$$
L_\ell(A, \kappa) = L(A, \kappa) \otimes_\ZZ \ZZ_\ell, \quad V_\ell(A, \kappa) = V(A, \kappa) \otimes_\QQ \QQ_\ell.
$$

\begin{proposition}\label{quadratic  III}
If $\ell \neq p$ is a prime then there is an $\Oo_{K, \ell}$-linear isomorphism of $\ZZ_\ell$-quadratic spaces
$$
(\Oo_{K, \ell}, \beta_\ell\cdot\NN_{K_\ell/\QQ_\ell}) \cong (L_\ell(A, \kappa), \deg^\ast)
$$
for some $\beta_\ell \in \ZZ_\ell$ with $\beta_\ell = -1$ if $\ell \nmid d_B$ and $\ord_\ell(\beta_\ell) = 1$ if $\ell \mid d_B$.
\end{proposition}

\begin{proof}
First suppose $\ell \nmid d_B$ and let $T_\ell = T_\ell(A)$ be the $\ell$-adic Tate module of $A$. The
standard idempotents $\varepsilon, \varepsilon' \in \MM_2(\ZZ_\ell) \cong \Oo_B \otimes_\ZZ \ZZ_\ell$ induce a decomposition
$T_\ell = \varepsilon T_\ell \oplus \varepsilon' T_\ell$. As the $\Oo_{K, \ell}$ and $\Oo_{B, \ell}$ actions on $T_\ell$ commute,
the $\ZZ_\ell$-module $\varepsilon T_\ell$ is an $\Oo_{K, \ell}$-module, free of rank 1 by considering $K_\ell$-dimensions.
There are $\ZZ_\ell$-algebra isomorphisms (see \cite[\S 3.4]{P})
$$
\End_{\Oo_B}(A) \otimes_\ZZ \ZZ_\ell \cong \End_{\Oo_B}(T_\ell) \cong \End_{\ZZ_\ell}(\varepsilon T_\ell) \cong \End_{\ZZ_\ell}(
\Oo_{K, \ell}).
$$
Let $f_0 \in \End_{\ZZ_\ell}(\Oo_{K, \ell})$ be defined by $f_0(x) = \overline{x}$. Then
$$
\End_{\ZZ_\ell}(\Oo_{K, \ell}) = \Oo_{K, \ell} \oplus \Oo_{K, \ell}\cdot f_0
$$
and $L_\ell(A, \kappa) = \Oo_{K, \ell}\cdot f_0$, so for any $xf_0 \in L_\ell(A, \kappa)$, 
$$
\deg^\ast(xf_0) = -(xf_0)^2 = -x\overline{x}f_0^2 = -\NN_{K_\ell/\QQ_\ell}(x).
$$
Therefore the map $\Oo_{K, \ell} \to L_\ell(A, \kappa)$ given by $x \mapsto xf_0$ defines an
$\Oo_{K, \ell}$-linear isomorphism of $\ZZ_\ell$-quadratic spaces
$$
(\Oo_{K, \ell}, -\NN_{K_\ell/\QQ_\ell}) \to (L_\ell(A, \kappa), \deg^\ast).
$$

Now suppose $\ell \mid d_B$. Viewing $K$ as a $\QQ$-subalgebra of $B^{(p)}$ via $\kappa$, there is a decomposition
$$
B^{(p)}_\ell = K_\ell \oplus K_\ell\cdot f_0
$$
for any $f_0 \in V_\ell(A, \kappa)$. Choosing $f_0$ to be an $\Oo_{K, \ell}$-generator of $L_\ell(A, \kappa)$, the map
$x \mapsto xf_0$ defines an isomorphism of $\ZZ_\ell$-quadratic spaces
$$
(\Oo_{K, \ell}, \beta_\ell\cdot\NN_{K_\ell/\QQ_\ell}) \to (L_\ell(A, \kappa), \deg^\ast)
$$
with $\beta_\ell = -f_0^2 = \deg^\ast(f_0)$. Then from
$$
B^{(p)}_\ell \cong \left(\frac{d_K, -\beta_\ell}{\QQ_\ell}\right)
$$
we have $(d_K, -\beta_\ell)_\ell = -1$ as $\ell \mid \disc(B^{(p)})$. 

There is an isomorphism of $\ZZ_\ell$-algebras $\End_{\Oo_B}(A) \otimes_\ZZ \ZZ_\ell \cong \Oo_{B, \ell}$ (see \cite[\S 3.4]{P}), which
is the unique maximal order in $B^{(p)}_\ell$, and the quadratic form $\deg^\ast$ on $\End_{\Oo_B}(A) \otimes_\ZZ \ZZ_\ell$
corresponds to the quadratic form of reduced norm on $\Oo_{B, \ell}$, so $f \in B^{(p)}_\ell$ is in $\End_{\Oo_B}(A) \otimes_\ZZ \ZZ_\ell$
if and only if $\deg^\ast(f) \in \ZZ_\ell$. As $(d_K, -\beta_\ell)_\ell = -1$, the element $-\beta_\ell \in \ZZ_\ell$ is not a norm
from $\QQ_\ell(\sqrt{d_K}) \cong K_\ell$, which means $\ord_\ell(-\beta_\ell) = \ord_\ell(\beta_\ell)$ is odd (since $K_\ell/\QQ_\ell$ is
unramified). If $\ord_\ell(\beta_\ell) \gqq 3$ then $\deg^\ast(\ell^{-1}f_0) \in \ZZ_\ell$ since $\deg^\ast(\ell) = \ell^2$, so
$\ell^{-1}f_0 \in L_\ell(A, \kappa)$. But $f_0$ is an $\Oo_{K, \ell}$-module generator of $L_\ell(A, \kappa)$, so this is a contradiction
and hence $\ord_\ell(\beta_\ell) = 1$. 
\end{proof}

\begin{proposition}\label{quadratic IV}
There is an $\Oo_{K, p}$-linear isomorphism of $\ZZ_p$-quadratic spaces
$$
(\Oo_{K, p}, \beta_p\cdot \NN_{K_p/\QQ_p}) \cong (L_p(A, \kappa), \deg^\ast)
$$
for some $\beta_p \in \ZZ_p$ satisfying $\ord_p(\beta_p) = 2 - e_p\varepsilon_p$, where $\varepsilon_p = 1 - \ord_p(d_B)$.
\end{proposition}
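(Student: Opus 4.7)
The plan is to reduce the proof to a single $p$-adic valuation computation. If $L_p=0$ there is nothing to prove, so assume $L_p\neq 0$; the proof of Proposition~\ref{quadratic  III} shows $A$ must be supersingular, hence $V_p$ has $K_p$-dimension one. Since $\Oo_{K,p}$ is a discrete valuation ring, the rank-one $\Oo_{K,p}$-lattice $L_p\subset V_p$ is free, so we may write $L_p=\Oo_{K,p}\cdot f_0$ for some $f_0$. The map $x\mapsto x\cdot f_0$ then gives an $\Oo_{K,p}$-linear isomorphism $(\Oo_{K,p},\beta_p\cdot\NN_{K_p/\QQ_p})\cong(L_p,\deg^\ast)$ where $\beta_p=\deg^\ast(f_0)=-f_0^2$; changing the generator multiplies $\beta_p$ by a norm from $\Oo_{K,p}^\times$, a $p$-adic unit, so $\ord_p(\beta_p)$ is intrinsic and is what remains to compute.

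To compute $\ord_p(\beta_p)$ I would use a discriminant identity in the order $R:=\End_{\Oo_B}(A)\otimes_\ZZ\ZZ_p\subset B^{(p)}_p$. Exactly as in the proof of Proposition~\ref{quadratic  III}, every element of $V_p$ has reduced trace zero, so the restriction of the reduced norm form $\Nrd$ to $V_p$ coincides with $\deg^\ast$, and the decomposition $B^{(p)}_p=\kappa(K_p)\oplus V_p$ is orthogonal for $\Nrd$. In particular $L_p=R\cap V_p$ and $\Oo_{K,p}\oplus L_p\hookrightarrow R$ is an orthogonal inclusion of full-rank $\ZZ_p$-lattices. The standard lattice-discriminant identity becomes
$$
d_{K_p}\cdot\beta_p^2\cdot d_{K_p}\;=\;[R:\Oo_{K,p}+L_p]^2\cdot\disc(R,\Nrd),
$$
so $\ord_p(\beta_p)$ is pinned down by $\ord_p(\disc R)$ together with the index $[R:\Oo_{K,p}+L_p]$.

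I would then split into two cases according to whether $p$ divides $d_B$. If $p\nmid d_B$, then $\Oo_{B,p}\cong\MM_2(\ZZ_p)$ and the idempotent argument of Proposition~\ref{quadratic  III} carries over to $p$-divisible groups, identifying $R$ with $\End(\frakg)=\Delta$, the maximal order in the division quaternion algebra over $\QQ_p$, for which $\ord_p(\disc\Delta)=2$. Exhibiting $f_0$ directly (as $\Pi$ in the inert case and as a trace-zero unit of $\Oo_{K'}\subset\Delta$ in the ramified case with $p$ odd) verifies $R=\Oo_{K,p}+L_p$, and the identity yields $\ord_p(\beta_p)=2-e_p=2-e_p\varepsilon_p$ with $\varepsilon_p=1$. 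If $p\mid d_B$, then $\Oo_{B,p}\cong\Delta$ is the maximal order in the division algebra but $B^{(p)}_p\cong\MM_2(\QQ_p)$ is split, so Morita is not available; instead, using the Serre tensor decomposition $A\cong M\otimes_{\Oo_K}E$ of \cite[\S 3]{P} (with $E$ a supersingular CM elliptic curve), one identifies $R$ as an order in $\MM_2(\QQ_p)$ of reduced discriminant $p^{e_p+1}$, so $\ord_p(\disc R)=2e_p+2$; the identity then forces $\ord_p(\beta_p)=2=2-e_p\varepsilon_p$ with $\varepsilon_p=0$.

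The hard part will be the case $p\mid d_B$: one must track the joint action of $\Oo_{B,p}$ and $\Oo_{K,p}$ on the Dieudonné module of $A[p^\infty]$ (equivalently, on the $(\Oo_B\otimes\Oo_K)$-module $M$) and verify that the resulting QM-linear endomorphism ring is an order in $\MM_2(\QQ_p)$ of the specific reduced discriminant $p^{e_p+1}$. The wildly ramified case $p=2$ with $K/\QQ$ ramified requires separate attention, since then $\ord_p(d_{K_p})$ exceeds $e_p-1$ and the index $[R:\Oo_{K,p}+L_p]$ is no longer trivial; however, the discriminant identity accommodates this contribution cleanly, so once the structural identification of $R$ is made the formula $\ord_p(\beta_p)=2-e_p\varepsilon_p$ follows uniformly.
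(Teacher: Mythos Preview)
Your opening paragraph matches the paper's: pick a generator $f_0$ of the free rank-one $\Oo_{K,p}$-module $L_p(A,\kappa)$ and set $\beta_p=\deg^\ast(f_0)$. From there your route diverges. For $p\nmid d_B$ the paper does not use a discriminant identity; it observes that $B^{(p)}_p\cong(d_K,-\beta_p\,/\,\QQ_p)$ forces $(d_K,-\beta_p)_p=-1$, and that $R\cong\Delta$ is the maximal order, so membership in $R$ is detected purely by integrality of the reduced norm. If $\ord_p(\beta_p)$ were too large, then $p^{-1}f_0$ (inert case) or $\pi^{-1}f_0$ (ramified case) would still have integral $\deg^\ast$ and hence lie in $L_p$, contradicting that $f_0$ generates. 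This pins down $\ord_p(\beta_p)=2-e_p$ in two lines, uniformly---including the wildly ramified $p=2$ case you flag as problematic. Your discriminant identity is correct in principle, but it trades that one-line contradiction for the computation of the index $[\Delta:\Oo_{K,p}+L_p]$, which (as you note) is genuinely $>1$ when $p=2$ is wildly ramified. You assert that ``the discriminant identity accommodates this contribution cleanly'' but never compute the index, so this case is left open in your proposal; the paper's argument sidesteps the issue entirely.

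For $p\mid d_B$ the two approaches essentially coincide. The standing hypothesis that every prime dividing $d_B$ is inert in $K$ gives $e_p=1$ here, so there is no ramified subcase to worry about. The paper uses the explicit description of $R\subset\MM_2(\Delta)$ (exactly the one you invoke via the Serre tensor decomposition) to write down the generator $f_0=\left(\begin{smallmatrix}0&\Pi\\ p\Pi&0\end{smallmatrix}\right)$ and read off $\beta_p=\deg^\ast(f_0)=-p^2$ directly. Computing the reduced discriminant of $R$ and feeding it through your discriminant identity is the same calculation in a different guise; once the explicit $R$ is in hand, evaluating $\beta_p$ directly is shorter. One small correction: your opening ``if $L_p=0$ there is nothing to prove'' is not quite right---the statement would simply be false then---but in the setup of the section $p$ is nonsplit, so $A$ is automatically supersingular and $L_p\neq 0$.
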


\begin{proof}
There is an $\Oo_{K, p}$-linear isomorphism of $\ZZ_p$-quadratic spaces
$$
(\Oo_{K, p}, \beta_p\cdot \NN_{K_p/\QQ_p}) \to (L_p(A, \kappa), \deg^\ast)
$$
given by $x \mapsto xf_0$, where $f_0$ is an $\Oo_{K, p}$-module generator of $L_p(A, \kappa)$ and $\beta_p = \deg^\ast(f_0)$.
First suppose $p \nmid d_B$. Then
$$
B_p^{(p)} \cong \left(\frac{d_K, -\beta_p}{\QQ_p}\right)
$$
implies $(d_K, -\beta_p)_p = -1$, and $\End_{\Oo_B}(A) \otimes_{\ZZ} \ZZ_p \cong \Delta$ is the unique maximal order in $B_p^{(p)}$ (\cite[\S 3.4]{P}).
Suppose $p$ is unramified in $K$, so $\ord_p(\beta_p)$ is odd. If $\ord_p(\beta_p) \gqq 3$ then $\deg^\ast(p^{-1}f_0) \in \ZZ_p$,
which means $p^{-1}f_0 \in L_p(A, \kappa)$. This is a contradiction, so $\ord_p(\beta_p) = 1$. Next suppose $p$ is ramified in
$K$ and let $\pi \in \Oo_{K, p}$ be a uniformizer. If $\ord_p(\beta_p) > 0$ then $\deg^\ast(\pi^{-1}f_0) \in \ZZ_p$ as 
$\NN_{K_p/\QQ_p}(\pi)$ is a uniformizer of $\ZZ_p$. Again this implies $\pi^{-1}f_0 \in L_p(A, \kappa)$, which is a contradiction,
so $\ord_p(\beta_p) = 0$.

Now suppose $p \mid d_B$. Then $\End_{\Oo_B}(A) \otimes_{\ZZ} \ZZ_p \cong R$, with
$$
R = \left\{\begin{bmatrix} x & y\Pi \\ py\Pi & x \end{bmatrix} : x, y \in \Oo_{K, p}\right\} \subset \MM_2(\Delta),
$$
where $\Pi \in \Delta$ is a uniformizer satisfying $\Pi x = \overline{x} \Pi$ for all $x \in \Oo_{K, p}$, 
and $\kappa : \Oo_{K, p} \to R$ is given by $\kappa(x) = \text{diag}(x, x)$
(see \cite[Proposition 3.30]{P}). It follows that $L_p(A, \kappa) = \Oo_{K, p} \cdot f_0$,
where
$$
f_0 = \begin{bmatrix} 0 & \Pi \\ p\Pi & 0 \end{bmatrix}.
$$
Since $\beta_p = \deg^\ast(f_0) = -p^2$ (\cite[Proposition 5.4]{P}), we have $\ord_p(\beta_p) = 2$.
\end{proof}

\section{Counting geometric points}
Define two algebraic groups $T$ and $T^1$ over $\QQ$ whose functors of points are given by
\begin{align*}
&T(R) = (K \otimes_{\QQ} R)^\times \\
&T^1(R) = \{x \in T(R) : \NN_{K/\QQ}(x) = 1\}
\end{align*}
for any $\QQ$-algebra $R$. Define a homomorphism $\eta : T \to T^1$ given on points by $\eta(x) = \overline{x}^{-1}x$. Let
$U = \widehat{\Oo}_K^\times \subset T(\widehat{\QQ}) = \widehat{K}^\times$, so $U = \prod_\ell U_\ell$ for some groups $U_\ell \subset
T(\QQ_\ell)$, and let $U^1 = \eta(U) = \prod_\ell U_\ell^1$ for some groups $U_\ell^1$. If $R$ is a field of characteristic $0$ or $\widehat{\QQ}$, then the sequence
\begin{equation}\label{exact sequence}
1 \to R^\times \to T(R) \mapp{\eta} T^1(R) \to 1
\end{equation}
is exact (\cite[Proposition 2.13]{Howard}), so in particular there is an isomorphism of groups
\begin{equation}\label{iso I}
T(\QQ)\dc T(\widehat{\QQ})/U \cong T^1(\QQ)\dc T^1(\widehat{\QQ})/U^1.
\end{equation}
Also, there is an isomorphism of groups
\begin{equation}\label{iso II}
T(\QQ)\dc T(\widehat{\QQ})/U \to \Cl(\Oo_K)
\end{equation}
given by 
$$
t \mapsto \prod_{\frakp \subset \Oo_K}\frakp^{\ord_{\frakp}(t_\frakp)}.
$$

Let $p$ be a prime that is nonsplit in $K$, let $\frakp \subset \Oo_K$ be the prime over $p$, and let $(A, \kappa) \in 
\YYY(\overline{\FF}_\frakp)$. Recall that $K$ acts on $V(A, \kappa)$ by $x \cdot f = \kappa(x) \circ f$. By restriction, the group
$T^1(\QQ) \subset K^\times$ acts on $V(A, \kappa)$, and for any $m \in \QQ^\times$, the set
$$
\{f \in V(A, \kappa) : \deg^\ast(f) = m\}
$$
is either empty or a simply transitive $T^1(\QQ)$-set. By composing with the homomorphism $\eta : T \to T^1$, the group $T(\QQ)$
acts on $V(A, \kappa)$, and this action is given by
$$
t \bullet f = \kappa(t) \circ f \circ \kappa(t)^{-1}.
$$

Now fix $t \in T(\widehat{\QQ})$ and let $\fraka \in \Cl(\Oo_K)$ be its image under (\ref{iso II}). We will write $\fraka \otimes A$ for
the CMQM abelian surface $\fraka \otimes_{\Oo_K} A$ determined by $(\fraka \otimes_{\Oo_K} A)(X) = \fraka \otimes_{\Oo_K} A(X)$ for any $\overline{\FF}_\frakp$-scheme $X$. There is an $\Oo_K$-linear quasi-isogeny
$f \in \Hom_{\Oo_B}(A, \fraka \otimes A) \otimes_{\ZZ} \QQ$,
given on points by $f(x) = 1 \otimes x$. Then the map
$\End^0_{\Oo_B}(\fraka \otimes A) \to \End^0_{\Oo_B}(A)$
given by $\varphi \mapsto f^{-1} \circ \varphi \circ f$ is an isomorphism of $K$-vector spaces, and restricting gives an isomorphism
$V(\fraka \otimes A, \kappa) \to V(A, \kappa)$. This map identifies $\End_{\Oo_B}(\fraka \otimes A)$ with the $\Oo_K$-submodule
$$
\kappa(\fraka) \circ \End_{\Oo_B}(A) \circ \kappa(\fraka^{-1}) \subset \End^0_{\Oo_B}(A)
$$
by \cite[Lemma 7.14]{Conrad} and identifies $L(\fraka \otimes A, \kappa)$ with $\kappa(\fraka) \circ L(A, \kappa) \circ \kappa(\fraka^{-1})$. Therefore there is 
a $\widehat{K}$-linear isomorphism
$\widehat{V}(A, \kappa) \cong \widehat{V}(\fraka \otimes A, \kappa)$
with $\widehat{L}(\fraka \otimes A, \kappa)$ isomorphic to the $\widehat{\Oo}_K$-submodule
$$
t \bullet \widehat{L}(A, \kappa) = \{\kappa(t) \circ f \circ \kappa(t)^{-1} : f \in \widehat{L}(A, \kappa)\}
$$
of $\widehat{V}(A, \kappa)$.

\begin{definition}
Let $(A, \kappa) \in \YYY(\overline{\FF}_\frakp)$. For each prime number $\ell$ and $m \in \QQ^\times$, define the \textit{orbital
integral} at $\ell$ by
$$
O_\ell(m, A, \kappa) = \sum_{t \in \QQ_\ell^\times\dc T(\QQ_\ell)/U_\ell}\mathbf{1}_{L_\ell(A, \kappa)}(t^{-1}\bullet f)
$$
if there is an $f \in V_\ell(A, \kappa)$ satisfying $\deg^\ast(f) = m$. If no such $f$ exists, set $O_\ell(m, A, \kappa) = 0$. 
\end{definition}

Here $\mathbf{1}_X$ is the characteristic function of a set $X$.
This definition does not depend on the choice of $f \in V_\ell(A, \kappa)$ such that $\deg^\ast(f) = m$ since $T(\QQ_\ell)$
acts simply transitively on the set of all such $f$.

\begin{proposition}\label{orbital III}
Let $p$ be a prime nonsplit in $K$, let $\frakp \subset \Oo_K$ be the prime over $p$, and suppose $(A, \kappa) \in \YYY(\overline{\FF}_\frakp)$.
For any $m \in \QQ^\times$ positive,
$$
\sum_{\fraka \in \Cl(\Oo_K)}\#\{f \in L(\fraka \otimes A, \kappa) : \deg^\ast(f) = m\} = \frac{|\Oo_K^\times|}{2}\prod_\ell O_\ell(m, A, \kappa).
$$
\end{proposition}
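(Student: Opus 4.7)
The plan is to transform both sides of the identity into sums over $T^1(\widehat{\QQ})/U^1$ and match them, with the prefactor $|\Oo_K^\times|/2$ emerging as the size of a fiber.

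First, a reduction: if $V_m(A,\kappa) = \varnothing$ then both sides vanish. Indeed, the identification $V(\fraka \otimes A, \kappa) \cong V(A, \kappa)$ constructed just above shows that $V_m(\fraka \otimes A, \kappa)$ is empty for every $\fraka$, whence the LHS is zero; on the RHS, the local-global principle for norms in $K/\QQ$ applied to the one-dimensional $K$-quadratic space $V(A, \kappa)$ forces some $V_{\ell, m}(A, \kappa)$ to be empty, so $O_\ell(m, A, \kappa) = 0$. Assume henceforth $V_m(A, \kappa) \neq \varnothing$ and fix $f_0 \in V_m(A, \kappa)$. Since $T^1(\QQ)$ acts simply transitively on $V_m(A, \kappa)$, we obtain a bijection $s \mapsto \kappa(s) f_0$ from $T^1(\QQ)$ to $V_m(A, \kappa)$.

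Using the identification $\widehat L(\fraka \otimes A, \kappa) = t_\fraka \bullet \widehat L(A, \kappa)$ developed just above, and writing $\tau_\fraka = \eta(t_\fraka)$, an element $f \in L(\fraka \otimes A, \kappa)$ of degree $m$ corresponds under $V(\fraka \otimes A, \kappa) \cong V(A, \kappa)$ to an $f \in V_m(A, \kappa)$ lying in $\kappa(\tau_\fraka)\,\widehat L(A, \kappa)$. Parameterizing by $f = \kappa(s) f_0$,
\[
\text{LHS} = \sum_{\fraka \in \Cl(\Oo_K)}\;\sum_{s \in T^1(\QQ)} \mathbf{1}_{\widehat L(A, \kappa)}\bigl(\kappa(s \tau_\fraka^{-1}) f_0\bigr).
\]
By the two local propositions above, $\widehat L(A,\kappa)$ is a free $\widehat{\Oo}_K$-module of rank one, so $U$ preserves it through $\kappa$; thus $U^1 = \eta(U)$ also does, and the indicator depends only on $s\tau_\fraka^{-1}$ modulo $U^1$. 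By the isomorphism (\ref{iso I}), as $\fraka$ ranges over $\Cl(\Oo_K)$ the elements $\tau_\fraka^{-1}$ form a full set of double-coset representatives for $T^1(\QQ)\backslash T^1(\widehat{\QQ})/U^1$; and since $T^1$ is commutative, the stabilizer of any such coset under left $T^1(\QQ)$-multiplication is $T^1(\QQ) \cap U^1$. Hence the map $(\fraka, s) \mapsto s \tau_\fraka^{-1} U^1$ surjects onto $T^1(\widehat{\QQ})/U^1$ with constant fiber size $|T^1(\QQ) \cap U^1|$, giving
\[
\text{LHS} = |T^1(\QQ) \cap U^1| \cdot \sum_{\rho \in T^1(\widehat{\QQ})/U^1} \mathbf{1}_{\widehat L}\bigl(\kappa(\rho) f_0\bigr) = |T^1(\QQ) \cap U^1| \cdot \prod_\ell O_\ell(m, A, \kappa),
\]
where the last equality is a direct unwinding of $\prod_\ell O_\ell$ using $\eta$ and the isomorphism $\widehat{\QQ}^\times \backslash T(\widehat{\QQ})/U \cong T^1(\widehat{\QQ})/U^1$ from (\ref{iso I}).

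It remains to show $|T^1(\QQ) \cap U^1| = |\Oo_K^\times|/2$. For any $u \in \Oo_K^\times$, the norm $N(u) = u\overline u$ is a positive integer unit, hence $1$, so $\overline u = u^{-1}$ and $\eta(u) = u^2$. Thus $\eta(\Oo_K^\times) = (\Oo_K^\times)^2$, which has index $|\{\pm 1\}| = 2$ in $\Oo_K^\times$. The inclusion $\eta(\Oo_K^\times) \subseteq T^1(\QQ) \cap U^1$ is immediate; for the reverse, any $x \in T^1(\QQ) \cap U^1$ lifts via (\ref{exact sequence}) to some $v \in K^\times$ with $\eta(v) = x$, and rescaling $v$ by a suitable $c \in \QQ^\times$ yields a unit of $\Oo_K^\times$ (the condition $x \in U^1$ forces the valuations of $v$ to be balanced across primes over each rational prime in exactly the way needed for such a $c$ to exist). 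The main obstacle in the whole argument is the fiber-counting step above; once set up carefully using the commutativity of $T$ and the freeness of $\widehat L$ over $\widehat{\Oo}_K$, the constant $|\Oo_K^\times|/2$ drops out cleanly from the identity $\eta|_{\Oo_K^\times} = (\cdot)^2$.
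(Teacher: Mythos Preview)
Your proof is correct and follows essentially the same approach as the paper's: both rewrite the left side as a double sum over $T^1(\QQ)\backslash T^1(\widehat{\QQ})/U^1$ and $T^1(\QQ)$, collapse it to a single sum over $T^1(\widehat{\QQ})/U^1$ picking up the factor $|T^1(\QQ)\cap U^1|$, and then factor into local orbital integrals. Your justification of $|T^1(\QQ)\cap U^1| = |\Oo_K^\times|/2$ via $\eta|_{\Oo_K^\times} = (\cdot)^2$ is slightly more explicit than the paper's one-line assertion $T^1(\QQ)\cap U^1 \cong \Oo_K^\times/\{\pm1\}$, but the content is the same; the valuation-balancing step you sketch for the reverse inclusion is exactly the observation that the fractional ideal $(v)$ is extended from $\QQ$.
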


\begin{proof}
Using the isomorphisms (\ref{iso II}) and (\ref{iso I}),
$$
\sum_{\fraka \in \Cl(\Oo_K)}\#\{f \in L(\fraka \otimes A, \kappa) : \deg^\ast(f) = m\} =  \sum_{t \in T^1(\QQ)\dc T^1(\widehat{\QQ})/U^1}\sum_{\substack{f \in V(A, \kappa) \\ \deg^\ast(f) = m}}\mathbf{1}_{t\bullet\widehat{L}
(A, \kappa)}(f).
$$
Suppose there is an $f_0 \in V(A, \kappa)$ such that $\deg^\ast(f) = m$. Since the action of $T^1(\QQ)$ on the set of all such $f_0$
is simply transitive,
\begin{align*}
\sum_{t \in T^1(\QQ)\dc T^1(\widehat{\QQ})/U^1}\sum_{\substack{f \in V(A, \kappa) \\ \deg^\ast(f) = m}}\mathbf{1}_{t\bullet\widehat{L}
(A, \kappa)}(f)  &= \sum_{t \in T^1(\QQ)\dc T^1(\widehat{\QQ})/U^1}\sum_{\gamma \in T^1(\QQ)}\mathbf{1}_{t\bullet\widehat{L}
(A, \kappa)}(\gamma^{-1}\bullet f_0) \\
&= \sum_{t \in T^1(\QQ)\dc T^1(\widehat{\QQ})/U^1}\sum_{\gamma \in T^1(\QQ)}\mathbf{1}_{\gamma t\bullet\widehat{L}
(A, \kappa)}(f_0) \\
&= |T^1(\QQ) \cap U^1| \sum_{t \in T^1(\widehat{\QQ})/U^1} \mathbf{1}_{t \bullet \widehat{L}(A, \kappa)}(f_0) \\
&= \frac{|\Oo_K^\times|}{2} \prod_\ell O_\ell(m, A, \kappa),
\end{align*}
where we are using 
$$
T^1(\QQ) \cap U^1 \cong (T(\QQ) \cap U)/\{\pm 1\} = \Oo_K^\times/\{\pm 1\}
$$
and the isomorphism 
$$
\QQ_\ell^\times\dc T(\QQ_\ell)/U_\ell \cong T^1(\QQ_\ell)/U^1_\ell
$$
coming from the exact sequence (\ref{exact sequence}). If there is no such $f_0$ then by the Hasse-Minkowski theorem there
is some prime $\ell < \infty$ such that $(V_\ell(A, \kappa), \deg^\ast)$ does not represent $m$, since $V_\infty(A, \kappa)$ does represent
$m$. Thus $O_\ell(m, A, \kappa) = 0$ and both sides of the stated equality are $0$. 
\end{proof}

\begin{proposition}\label{orbital IV}
If $(A, \kappa)$ is any object of $\YYY(\overline{\FF}_\frakp)$ and $m$ is a positive integer, then
$$
\#[\YYY_m(\overline{\FF}_\frakp)] = 2^r\prod_\ell O_\ell(m, A, \kappa),
$$
where $r$ is the number of primes dividing $d_B$.
\end{proposition}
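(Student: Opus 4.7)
The strategy is to count $\#[\YYY_m(\overline{\FF}_\frakp)]$ via the forgetful map to $[\YYY(\overline{\FF}_\frakp)]$. If $A$ is ordinary then by the first proposition of Section 2 both sides vanish ($L(A,\kappa)=0$ and $V_\ell(A,\kappa)=0$, so $\YYY_m(\overline{\FF}_\frakp)=\varnothing$ and every $O_\ell$ is zero), so I may assume $A$ is supersingular. An isomorphism class of triples $(A',\kappa',f)$ is then specified by the class $[(A',\kappa')]$ together with an $\Aut(A',\kappa')$-orbit on $\{f \in L(A',\kappa') : \deg^\ast(f) = m\}$, so
\[
\#[\YYY_m(\overline{\FF}_\frakp)] = \sum_{[(A',\kappa')]}\#\bigl(\{f \in L(A',\kappa') : \deg^\ast(f)=m\}/\Aut(A',\kappa')\bigr).
\]

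First I would identify the automorphism group and orbit sizes. Since $A'$ is supersingular, $\End_{\Oo_B}^0(A') \cong B^{(p)}$ and $\kappa(K)$ is a maximal commutative subfield; its centralizer in $\End_{\Oo_B}(A')$ is $\kappa(\Oo_K)$, so $\Aut(A',\kappa') = \Oo_K^\times$ acting by conjugation. Using $\kappa(y)\circ f = f\circ\kappa(\overline{y})$, for $y \in \Oo_K^\times$ and $f \in L(A',\kappa')$,
\[
\kappa(y) \circ f \circ \kappa(y)^{-1} = f \circ \kappa(\overline{y}\, y^{-1}).
\]
As $B^{(p)}$ is a division algebra, this equals $f$ (for $f \ne 0$) precisely when $\overline{y} = y$, i.e.\ $y \in \{\pm 1\}$. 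Hence every orbit has size $|\Oo_K^\times|/2$, and
\[
\#[\YYY_m(\overline{\FF}_\frakp)] = \frac{2}{|\Oo_K^\times|}\sum_{[(A',\kappa')]}\#\{f \in L(A',\kappa') : \deg^\ast(f)=m\}.
\]

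Next I would use the simply transitive action of $W_0 \times \Cl(\Oo_K)$ on $[\YYY(\overline{\FF}_\frakp)]$, with $|W_0| = 2^r$. From the description $A \cong M \otimes_{\Oo_K} E$ recalled in Section 2, the $W_0$-action (twisting the $\Oo_B$-module $M$) and the $\Cl(\Oo_K)$-action (tensoring $E$ by an ideal) act on independent factors and hence commute, so every class can be written as $\fraka \otimes A_w$ with $A_w := w \cdot A$. Applying Proposition \ref{orbital III} to each base point $A_w$ gives
\[
\sum_{\fraka \in \Cl(\Oo_K)}\#\{f \in L(\fraka \otimes A_w,\kappa) : \deg^\ast(f)=m\} = \frac{|\Oo_K^\times|}{2}\prod_\ell O_\ell(m,A_w,\kappa).
\]
By Propositions \ref{quadratic III} and \ref{quadratic IV} the quadratic $\Oo_{K,\ell}$-module $L_\ell(\cdot,\kappa)$ is determined up to isometry by the invariant $\beta_\ell$, which depends only on whether $\ell \mid d_B$ (and, at $\ell = p$, on ramification in $K$); this is unaffected by the Atkin-Lehner twist, so $O_\ell(m,A_w,\kappa) = O_\ell(m,A,\kappa)$ for every $\ell$. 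Summing over $w \in W_0$ and combining with the prefactor $2/|\Oo_K^\times|$ yields $2^r\prod_\ell O_\ell(m,A,\kappa)$.

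The main obstacle is the Atkin-Lehner invariance of the orbital integrals, equivalently the claim that $L_\ell(A_w,\kappa) \cong L_\ell(A,\kappa)$ as quadratic $\Oo_{K,\ell}$-modules for every $w \in W_0$. This follows from the local classification of Section 3: the invariant $\beta_\ell$ is pinned down by the local structure of $B^{(p)}$ and the ramification in $K$, both of which are independent of the Atkin-Lehner twist. With this invariance established, what remains is straightforward bookkeeping to reconcile the normalization $|\Oo_K^\times|/2$ of Proposition \ref{orbital III} with the orbit sizes computed from the automorphism action.
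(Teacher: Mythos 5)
Your proposal is correct and follows essentially the same route as the paper: compute $\Aut(A',\kappa')\cong\Oo_K^\times$ with stabilizer $\{\pm 1\}$ of a nonzero special endomorphism (giving the factor $2/|\Oo_K^\times|$), decompose $[\YYY(\overline{\FF}_\frakp)]$ via the simply transitive $W_0\times\Cl(\Oo_K)$-action, and apply Proposition \ref{orbital III} to the $\Cl(\Oo_K)$-sum, with the $W_0$-orbit contributing the factor $2^r$. The only (harmless) deviation is your justification that the orbital integrals are unchanged under the Atkin--Lehner twist: you deduce it from the local classification in Propositions \ref{quadratic III} and \ref{quadratic IV}, whereas the paper simply observes that the $W_0$-action leaves the underlying QM abelian surface and CM action unchanged, so $V(w\cdot A,\kappa)\cong V(A,\kappa)$ directly; both arguments work (and your ordinary-case discussion is vacuous here, since $p$ is nonsplit in $K$ and all objects of $\YYY(\overline{\FF}_\frakp)$ are supersingular).
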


\begin{proof}
Since $\End_{\Oo_B \otimes_{\ZZ}\Oo_K}(A) \cong \Oo_K$, we have $\Aut(A, \kappa) \cong \Oo_K^\times$, so an element of
$\Aut(A, \kappa, f)$ is $\kappa(x)$ for some $x \in \Oo_K^\times$ satisfying $\kappa(x) \circ f = f \circ \kappa(x)$. But $f$ is a special endomorphism, which means $\kappa(x) = \kappa(\overline{x})$ and thus $x \in \{\pm 1\}$. This shows $\Aut(A, \kappa, f)
= \{\pm 1\}$ for $f \in L(A, \kappa)$.
As the group $W_B \times \Cl(\Oo_K)$ acts simply transitively on the set $[\YYY(\overline{\FF}_\frakp)]$,
\begin{align*}
\#[\YYY_m(\overline{\FF}_\frakp)] &= \sum_{(A, \kappa) \in [\YYY(\overline{\FF}_\frakp)]}\sum_{\substack{f \in V(A, \kappa) \\ \deg^\ast(f) = m
}}\frac{|\Aut(A, \kappa, f)|}{|\Aut(A, \kappa)|}\cdot\mathbf{1}_{\widehat{L}(A, \kappa)}(f) \\
&= \frac{2}{|\Oo_K^\times|}\sum_{g \in W_B \times \Cl(\Oo_K)} \sum_{\substack{f \in V(g \cdot A, \kappa) \\ \deg^\ast(f) = m}}
\mathbf{1}_{\widehat{L}(g \cdot A, \kappa)}(f).
\end{align*}
But the action of $W_B$ on $[\YYY(\overline{\FF}_\frakp)]$ does not change the underlying QM abelian surface or the CM action (see \cite[\S 3.2]{P}),
so there is an isomorphism $V(w\cdot A, \kappa) \cong V(A, \kappa)$ for any $w \in W_B$. Therefore
$$
\#[\YYY_m(\overline{\FF}_\frakp)] = \frac{2|W_B|}{|\Oo_K^\times|}\sum_{\fraka \in \Cl(\Oo_K)}\sum_{\substack{f \in V(\fraka \otimes A,
\kappa) \\ \deg^\ast(f) = m}}\mathbf{1}_{\widehat{L}(\fraka \otimes A, \kappa)}(f) 
= 2^r\prod_\ell O_\ell(m, A, \kappa)
$$
by Proposition \ref{orbital III}.
\end{proof}

Recall the definitions of the functions $\rho$ and $\rho_\ell$ from the introduction.

\begin{proposition}\label{orbital V}
Let $\ell$ be a prime, $m$ a positive integer, and $(A, \kappa) \in \YYY(\overline{\FF}_\frakp)$. 
If the quadratic space $(V_\ell(A, \kappa), \deg^\ast)$ represents $m$, then
$$
O_\ell(m, A, \kappa) = e_\ell \rho_\ell(md_B^{-1}p^{(e_p-1)\varepsilon_p - 1}).
$$
\end{proposition}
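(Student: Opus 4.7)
The plan is to identify the orbital integral with a count of $U^1_\ell$-orbits of integral elements of $\Oo_{K,\ell}$ with a specified norm, and then to fiber that count over ideals of $\Oo_{K,\ell}$, with a cohomological fiber factor producing $e_\ell$. First I would fix an $\Oo_{K,\ell}$-linear isometry $(L_\ell(A, \kappa), \deg^\ast) \cong (\Oo_{K,\ell}, \beta_\ell \NN_{K_\ell/\QQ_\ell})$ from Propositions~\ref{quadratic  III} and~\ref{quadratic IV}, extending to $V_\ell \cong K_\ell$. A short computation using that $f$ is special shows that the $T(\QQ_\ell)$-action $t\bullet f = \kappa(t)f\kappa(t)^{-1}$ factors through $\eta : T \to T^1$ as multiplication in $K_\ell^\times$. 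Combined with the local version of the exact sequence (\ref{exact sequence}), which gives $\QQ_\ell^\times\dc T(\QQ_\ell)/U_\ell \cong T^1(\QQ_\ell)/U^1_\ell$, this lets me rewrite
$$O_\ell(m, A, \kappa) = \sum_{s \in T^1(\QQ_\ell)/U^1_\ell}\mathbf{1}_{\Oo_{K,\ell}}(sf_0),$$
where $f_0 \in K_\ell^\times$ is any element with $\NN(f_0) = m/\beta_\ell$, existing by the representability hypothesis. Since $T^1(\QQ_\ell)$ acts simply transitively on $\{g \in K_\ell^\times : \NN(g) = m/\beta_\ell\}$ via $s \mapsto sf_0$, the integral further equals the number of $U^1_\ell$-orbits in $S := \{g \in \Oo_{K,\ell} : \NN(g) = m/\beta_\ell\}$.

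Next I would fiber $S$ over the set of ideals of $\Oo_{K,\ell}$ via $g \mapsto g\Oo_{K,\ell}$. The image lies in the $R_\ell(m/\beta_\ell)$ (principal) ideals of norm $(m/\beta_\ell)\ZZ_\ell$. For each ideal $\mathfrak{b} = (g_0)$ in the image, the generators of $\mathfrak{b}$ of norm exactly $m/\beta_\ell$ form a single coset of $\ker(\NN|_{\Oo_{K,\ell}^\times})$, and the number of $U^1_\ell$-orbits therein equals
$$[\ker(\NN|_{\Oo_{K,\ell}^\times}) : U^1_\ell] = |H^1(\Gal(K_\ell/\QQ_\ell), \Oo_{K,\ell}^\times)|,$$
the identification coming from the exact sequence $1 \to \ZZ_\ell^\times \to \Oo_{K,\ell}^\times \map{\eta} U^1_\ell \to 1$. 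A standard Hilbert~90-type argument shows this $H^1$ vanishes for $\ell$ unramified in $K$ (so the unit norm map is surjective) and has order $2$ for $\ell$ ramified, matching $e_\ell$ in both cases.

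To finish, I would argue that representability guarantees every ideal of norm $(m/\beta_\ell)\ZZ_\ell$ is in the image of $g \mapsto g\Oo_{K,\ell}$: writing $m/\beta_\ell = \NN(u')\NN(\pi_{K_\ell})^k$ via representability and $\NN(g_0) = \NN(u)\NN(\pi_{K_\ell})^k$ for some generator $g_0$, with the same power $k$ forced by agreement of $\ell$-adic valuations, the ratio $\NN(g_0)/(m/\beta_\ell) = \NN(u)/\NN(u')$ lies in $\NN(\Oo_{K,\ell}^\times)$, allowing $g_0$ to be scaled to a generator of norm exactly $m/\beta_\ell$. This gives $O_\ell(m, A, \kappa) = e_\ell R_\ell(m/\beta_\ell)$, and the identity $R_\ell(m/\beta_\ell) = R_\ell(md_B^{-1}p^{(e_p-1)\varepsilon_p-1})$ is a direct check from the $\ord_\ell(\beta_\ell)$ formulas in Propositions~\ref{quadratic  III} and~\ref{quadratic IV}, across the four sub-cases determined by $\ell = p$ or $\ell \neq p$ and $\ell \mid d_B$ or $\ell \nmid d_B$. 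The main obstacle is the ramified case, where one must precisely match the cohomological factor $|H^1| = 2$ with $e_\ell$, and confirm that global representability is exactly the condition killing a potential norm-class obstruction in $\ZZ_\ell^\times/\NN(\Oo_{K,\ell}^\times)$ that would otherwise leave some ideals outside the image.
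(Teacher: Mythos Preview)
Your argument is correct and shares the same opening moves as the paper: fix the isometry $(L_\ell,\deg^\ast)\cong(\Oo_{K,\ell},\beta_\ell\NN)$ from Propositions~\ref{quadratic  III}--\ref{quadratic IV}, pass through $\QQ_\ell^\times\dc T(\QQ_\ell)/U_\ell \cong T^1(\QQ_\ell)/U^1_\ell$, and rewrite the orbital integral as $\sum_{s\in T^1(\QQ_\ell)/U^1_\ell}\mathbf{1}_{\Oo_{K,\ell}}(sf_0)$ with $\NN(f_0)=m\beta_\ell^{-1}$. From there, however, the paper does not fiber over ideals or invoke cohomology: it simply writes down the quotient $T^1(\QQ_\ell)/U^1_\ell$ explicitly in each of the three splitting cases (trivial for $\ell$ inert, two elements $\{1,\overline{\pi}^{-1}\pi\}$ for $\ell$ ramified, and $\{(\ell^k,\ell^{-k}):k\in\ZZ\}$ for $\ell$ split) and evaluates the sum by inspection, obtaining $R_\ell(m\beta_\ell^{-1})$, $2R_\ell(m\beta_\ell^{-1})$, and $1+\ord_\ell(m\beta_\ell^{-1})=R_\ell(m\beta_\ell^{-1})$ respectively. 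Your route packages the ramified factor of $2$ as $|H^1(\Gal(K_\ell/\QQ_\ell),\Oo_{K,\ell}^\times)|$ and treats all cases uniformly via the orbit--ideal fibration, which is conceptually cleaner and would generalize more readily; the paper's enumeration is more elementary and avoids the surjectivity check you flag at the end (it is automatic once the coset representatives are listed). Both arrive at $e_\ell R_\ell(m\beta_\ell^{-1})$, and the matching with $R_\ell(md_B^{-1}p^{(e_p-1)\varepsilon_p-1})$ is, as you say, just reading off $\ord_\ell(\beta_\ell)$.
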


\begin{proof}
Fix an $f \in V_\ell(A, \kappa)$ satisfying $\deg^\ast(f) = m$ and fix an isomorphism
$$
(\Oo_{K, \ell}, \beta_\ell \cdot \NN_{K_\ell/\QQ_\ell}) \cong (L_\ell(A, \kappa), \deg^\ast)
$$
with $\beta_\ell$ as in Propositions \ref{quadratic III} and \ref{quadratic IV}. Using the isomorphism
$$
\QQ_\ell^\times\dc T(\QQ_\ell)/U_\ell \cong T^1(\QQ_\ell)/U^1_\ell
$$
we have
$$
O_\ell(m, A, \kappa) = \sum_{t \in T^1(\QQ_\ell)/U^1_\ell}\mathbf{1}_{\Oo_{K, \ell}}(t^{-1}f).
$$
First suppose $\ell$ is inert in $K$. Then $\QQ_\ell^\times\dc K_\ell^\times/U_\ell = \{1\}$, so
$T^1(\QQ_\ell)/U^1_\ell = \{1\}$. Hence
$$
O_\ell(m, A, \kappa) = \mathbf{1}_{\Oo_{K, \ell}}(f) = \rho_\ell(m\beta_\ell^{-1})
$$
since $\NN_{K_\ell/\QQ_\ell}(f) = m\beta_\ell^{-1}$. 
Next suppose $\ell$ is ramified in $K$ and let $\pi \in \Oo_{K, \ell}$ be a uniformizer. Then
$\QQ_\ell^\times\dc K_\ell^\times/U_\ell = \{1, \pi\}$ and $T^1(\QQ_\ell)/U^1_\ell = \{1, u\}$ where
$u = \overline{\pi}^{-1}\pi \in \Oo_{K, \ell}^\times$, so
$$
O_\ell(m, A, \kappa) = \mathbf{1}_{\Oo_{K, \ell}}(f) + \mathbf{1}_{\Oo_{K, \ell}}(u^{-1}f) = 2\rho_\ell(m\beta_\ell^{-1}).
$$
Finally suppose $\ell$ is split in $K$, so $K_\ell \cong \QQ_\ell \times \QQ_\ell$. Then
$$
\QQ_\ell^\times\dc K_\ell^\times/U_\ell = \{(\ell^k, 1) : k \in \ZZ\}
$$
and $T^1(\QQ_\ell)/U^1_\ell = \{(\ell^k, \ell^{-k}) : k \in \ZZ\}$. Writing $f = (f_1, f_2) \in \QQ_\ell \times \QQ_\ell$,
we have
\begin{align*}
O_\ell(m, A, \kappa) &= \sum_{k \in \ZZ}\mathbf{1}_{\ZZ_\ell \times \ZZ_\ell}(\ell^kf_1, \ell^{-k}f_2) \\
&= 1 + \ord_\ell(f_1f_2) \\
&= 1 + \ord_\ell(m\beta_\ell^{-1}) \\
&= \rho_\ell(m\beta_\ell^{-1}).  \qedhere
\end{align*}
\end{proof}

\begin{theorem}\label{geometric points II}
Let $m$ be a positive integer. If $\Diff_B(m) = \{p\}$ then
$$
\#[\YYY_m(\overline{\FF}_\frakp)] = 2^{r + s}\rho(md_B^{-1}p^{(e_p-1)\varepsilon_p - 1}),
$$
where $\frakp \subset \Oo_K$ is the unique prime over $p$ and $s$ is the number of distinct primes dividing $d_K$. Furthermore, the number $\#[\YYY_m(\overline{\FF}_\frakp)]$
is nonzero, unless $p \mid d_B$ and $\ord_p(m) = 0$.
\end{theorem}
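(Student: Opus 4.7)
The plan is to combine Propositions~\ref{orbital IV} and~\ref{orbital V}. Fixing any $(A,\kappa)\in\YYY(\overline{\FF}_\frakp)$, Proposition~\ref{orbital IV} reduces the count to
$$
\#[\YYY_m(\overline{\FF}_\frakp)] = 2^r\prod_\ell O_\ell(m,A,\kappa),
$$
and Proposition~\ref{orbital V} evaluates each factor as $e_\ell\cdot R_\ell(md_B^{-1}p^{(e_p-1)\varepsilon_p-1})$, \emph{provided} $m$ is represented by the quadratic space $(V_\ell(A,\kappa),\deg^\ast)$. So the first step is to verify local representability at every finite $\ell$.

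For this I will use Propositions~\ref{quadratic III} and~\ref{quadratic IV}, which identify $V_\ell(A,\kappa)$ with $(K_\ell,\beta_\ell\cdot\NN_{K_\ell/\QQ_\ell})$ and exhibit the isomorphism $B^{(p)}_\ell\cong\left(\frac{d_K,-\beta_\ell}{\QQ_\ell}\right)$, so that $\inv_\ell(B^{(p)})=(d_K,-\beta_\ell)_\ell$. The element $m$ is a value of $\beta_\ell\NN_{K_\ell/\QQ_\ell}$ iff $m\beta_\ell^{-1}$ is a local norm from $K_\ell/\QQ_\ell$, iff $(d_K,-m)_\ell=(d_K,-\beta_\ell)_\ell=\inv_\ell(B^{(p)})$ by Hilbert-symbol bilinearity (and $(d_K,-1)_\ell^2=1$). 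Since $\inv_\ell(B^{(p)})=\inv_\ell(B)$ for $\ell\neq p$ and $\inv_p(B^{(p)})=-\inv_p(B)$, this representability is equivalent to $\ell\notin\Diff_B(m)$ when $\ell\neq p$ and to $\ell\in\Diff_B(m)$ when $\ell=p$; under the hypothesis $\Diff_B(m)=\{p\}$ both hold at every finite place. Thus Proposition~\ref{orbital V} applies at every $\ell$, and multiplying gives $\prod_\ell O_\ell(m,A,\kappa)=2^s\cdot R(md_B^{-1}p^{(e_p-1)\varepsilon_p-1})$, using $\prod_\ell e_\ell=2^s$ (since $e_\ell=2$ precisely for the $s$ primes ramified in $K$) and $\prod_\ell R_\ell=R$. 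Substituting into Proposition~\ref{orbital IV} yields the stated formula.

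For the non-vanishing assertion I will analyze when $R(n)>0$ for $n=md_B^{-1}p^{(e_p-1)\varepsilon_p-1}$: this requires $n\in\ZZ^+$ and $\ord_\ell(n)$ even at every prime $\ell$ inert in $K$. At each $\ell\neq p$ dividing $d_B$ (necessarily inert in $K$), $\ell\notin\Diff_B(m)$ together with $\inv_\ell(B)=-1$ forces $(d_K,-m)_\ell=-1$, hence $\ord_\ell(m)$ is odd and $\geq 1$, which supplies both local integrality and the required parity of $\ord_\ell(m)-1$. Away from $p$ and $d_B$, integrality is automatic and the parity check reduces to the local representability already established. The only remaining question is $p$-integrality, which by $\ord_p(\beta_p)=2-e_p\varepsilon_p$ from Proposition~\ref{quadratic IV} fails exactly when $p\mid d_B$ (so $\varepsilon_p=0$ and $\ord_p(\beta_p)=2$) and $\ord_p(m)<2$; since the representability forces $\ord_p(m)$ even in that case, this is equivalent to $\ord_p(m)=0$, giving precisely the stated exceptional case. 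The main obstacle is just keeping the case analysis straight (whether $p\mid d_B$, whether $p$ is ramified or inert in $K$); all the needed local data is supplied by Propositions~\ref{quadratic III}, \ref{quadratic IV}, and \ref{orbital V}.
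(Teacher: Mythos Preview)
Your proof is correct and follows essentially the same route as the paper: apply Proposition~\ref{orbital IV}, verify that $(V_\ell(A,\kappa),\deg^\ast)$ represents $m$ at every finite $\ell$, invoke Proposition~\ref{orbital V}, and then analyze $R_\ell$ for the non-vanishing claim. The one place where your argument differs slightly is the representability step: the paper argues globally, observing that $\Diff_B(m)=\{p\}$ forces $B^{(p)}\cong\left(\frac{d_K,-m}{\QQ}\right)$ and then reading off an explicit $f\in V(A,\kappa)$ with $\deg^\ast(f)=m$, whereas you check the equivalent Hilbert-symbol condition $(d_K,-m)_\ell=\inv_\ell(B^{(p)})$ place by place. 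Both arguments unwind to the same computation, so this is a cosmetic rather than a substantive difference.
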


\begin{proof}
Let $(A, \kappa) \in \YYY(\overline{\FF}_\frakp)$, so $\End^0_{\Oo_B}(A) \cong B^{(p)}$. From $\Diff_B(m) = \{p\}$ we have
$$
(d_K, -m)_\ell = \left\{\begin{array}{ll}
-1 & \text{if $\ell \mid \disc(B^{(p)})$} \\
1 & \text{if $\ell \nmid \disc(B^{(p)})$}
\end{array} \right.
$$
for any prime $\ell$, so there is an isomorphism
$$
B^{(p)} \cong \left(\frac{d_K, -m}{\QQ}\right).
$$
Hence $B^{(p)}$ has a $\QQ$-basis $\{1, \delta, f, \delta f\}$ satisfying
$$
\delta^2 = d_K, \quad f^2 = -m, \quad \delta f = -f\delta.
$$
Embed $K$ into $B^{(p)}$ via $\sqrt{d_K} \mapsto \delta$. Then $\{f, \delta f\}$ is a $\QQ$-basis for 
$V(A, \kappa) \subset \End^0_{\Oo_B}(A)$ and $\Nrd(f) = m$. Thus, there is an $f \in V(A, \kappa)$ satisfying 
$\deg^\ast(f) = m$. Then by Propositions \ref{orbital IV} and \ref{orbital V},
\begin{align*}
\#[\YYY_m(\overline{\FF}_\frakp)] &= 2^r\prod_\ell O_\ell(m, A, \kappa) \\
&= 2^r\prod_\ell e_\ell \rho_\ell(md_B^{-1}p^{(e_p-1)\varepsilon_p - 1}) \\
&= 2^{r+s}\rho(md_B^{-1}p^{(e_p-1)\varepsilon_p - 1}).
\end{align*}

Now we will show that this number is nonzero by showing $\rho_\ell = \rho_\ell(md_B^{-1}p^{(e_p-1)\varepsilon_p - 1})$
is nonzero for each prime $\ell$.  If $\ell \neq p$ and $\ell \nmid d_B$,
then $(d_K, -m)_\ell = 1$, which means $-m \in \NN_{K_\ell/\QQ_\ell}(K_\ell)$ and thus
$\rho_\ell = \rho_\ell(m) > 0$. The other cases are similar except when $p \mid d_B$. In this case $(d_K, -m)_p = 1$, which implies $\ord_p(m)$ is even and therefore $\rho_p = \rho_p(mp^{-2}) > 0$, unless $\ord_p(m) = 0$.
\end{proof}

\section{Deformation theory and final formula}
Fix a  prime $p$ nonsplit in $K$ and let $\frakp \subset \Oo_K$ be the prime over $p$. Let $\WWW$
be the ring of integers in the completion of the maximal unramified extension of $K_\frakp$, so $\WWW$
is an $\Oo_K$-algebra. Let $\mathbf{CLN}$ be the category of complete local Noetherian $\WWW$-algebras
with residue field $\overline{\FF}_\frakp$, where a morphism $R \to R'$ is a local  $\WWW$-algebra homomorphism inducing the
identity $\overline{\FF}_\frakp \to \overline{\FF}_\frakp$ on residue fields. 

For $x = (A, i, \kappa) \in \YYY(\overline{\FF}_\frakp)$ define a functor $\Def_{\Oo_B}(A, \Oo_K) : \mathbf{CLN} \to \mathbf{Sets}$ by assigning to each $R \in \mathbf{CLN}$ the set of isomorphism
classes of deformations of $x$ to $R$. By \cite[Proposition 3.10]{P}, $\Def_{\Oo_B}(A, \Oo_K)$ is represented
by $\WWW$. 
For $(A, i, \kappa) \in \YYY(\overline{\FF}_\frakp)$ and $f \in \End_{\Oo_B}(A)$, define a functor
$\Def(A, \kappa, f) : \mathbf{CLN} \to \mathbf{Sets}$ by assigning to each $R$
the set of isomorphism classes of deformations of $(A, i, \kappa, f)$ to $R$.
If $R \in \mathbf{CLN}$, $x = (A, i, \kappa, f) \in \YYY_m(\overline{\FF}_\frakp)$, and
$\widetilde{x} = (\widetilde{A}, \widetilde{i}, \widetilde{\kappa}, \widetilde{f})$ is a deformation of $x$ to $R$, then we must have $\widetilde{x} \in \YYY_m(R)$. 

Now fix a positive integer $m$ and a  triple $(A, \kappa, f) \in \YYY_m(\overline{\FF}_\frakp)$.
Let $\frakg$ be the connected $p$-divisible group of height $2$ and dimension $1$ over $\overline{\FF}_\frakp$.

\begin{proposition}\label{representability VI}
If $p \mid d_B$ then $\Def(A, \kappa, f)$ is represented by a local Artinian $\WWW$-algebra of
length $\frac{1}{2}\ord_p(m)$.
\end{proposition}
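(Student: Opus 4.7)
The plan is to reduce the deformation problem, via the Serre--Tate theorem, to a question about the $p$-divisible group $A[p^\infty]$ equipped with its $\Oo_B\otimes_{\ZZ}\Oo_K$-structure, and then to carry out the length calculation by combining the explicit local description at $p\mid d_B$ with a Grothendieck--Messing computation in the style of the proof of \cite[Theorem 2.27]{Howard}.

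First I would apply Serre--Tate to identify $\Def(A,\kappa,f)$ with the functor of deformations of the $p$-divisible group $(A[p^\infty],i,\kappa,f)$ to objects of $\mathbf{CLN}$. Since \cite[Proposition 3.6]{P} already identifies the universal deformation ring of $(A,i,\kappa)$ with $\WWW$, the subfunctor requiring $f$ to lift is represented by a quotient $\WWW/I$, and the problem reduces to proving that $\WWW/I$ is Artinian with $\length_\WWW(\WWW/I)=\tfrac{1}{2}\ord_p(m)$.

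Second I would unpack the local data from the proof of Proposition \ref{quadratic IV}. At a prime $p\mid d_B$ the standing hypothesis forces $p$ inert in $K$, so $K_p/\QQ_p$ is unramified; moreover $\End_{\Oo_B}(A)\otimes_{\ZZ}\ZZ_p\cong R\subset M_2(\Delta)$ and $L_p(A,\kappa)=\Oo_{K,p}\cdot f_0$ with $f_0=\begin{bmatrix}0&\Pi\\p\Pi&0\end{bmatrix}$ and $\deg^\ast(f_0)=-p^2$. Writing $f=\kappa(\alpha)\circ f_0$ for the unique $\alpha\in\Oo_{K,p}$, the identity $\NN_{K_p/\QQ_p}(\alpha)\cdot p^2=m$ combined with the unramifiedness of $K_p/\QQ_p$ forces $\ord_p(m)$ to be even and yields $\ord_{K_p}(\alpha)=\tfrac{1}{2}\ord_p(m)-1$. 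Because $\kappa$ extends to every deformation classified by $\WWW$, lifting $f$ reduces to lifting the quasi-endomorphism $f_0$ up to a denominator controlled by $\alpha$.

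The core remaining step, and the main obstacle, is the Grothendieck--Messing (or Dieudonn\'e-module) computation that pins down the precise level at which $f$ ceases to lift. Using the decomposition of $A[p^\infty]$ as two copies of $\frakg$ on which $R$ acts through its embedding into $M_2(\Delta)$, the universal Hodge filtration of the deformation over $\WWW$ is controlled by a single parameter, and the condition that $f$ preserves this filtration modulo $\frakm_\WWW^n$ becomes an explicit divisibility whose order of vanishing---combining the contribution of $\ord_{K_p}(\alpha)$ with the $\Pi$-factors coming from $f_0$---equals $\tfrac{1}{2}\ord_p(m)$. This is the QM-ramified analogue, with the halving reflecting the inert character of $K_p/\QQ_p$, of the length calculation for CM elliptic curves carried out in \cite[Chapter 3]{KRY}.
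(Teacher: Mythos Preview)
Your strategy is sound, and the reduction via Serre--Tate together with the explicit description of $L_p(A,\kappa)=\Oo_{K,p}\cdot f_0$ is exactly how the paper begins. The difference lies in how the ``core remaining step'' is handled. You propose a direct Grothendieck--Messing analysis of the Hodge filtration condition; the paper instead black-boxes this entirely. Concretely, the paper invokes \cite[Proposition 2.9]{RZ} to know that $\Def(A,\kappa,f)$ is represented by $W_n=W/(p^n)$ (here $\WWW=W$, since $p$ is inert in $K$), where $n$ is maximal with the property that $f$ lies in the $\Oo_B$-linear endomorphism ring of the reduction modulo $p^n$ of the universal deformation $\widetilde{A}[p^\infty]$. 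That endomorphism ring is then read off from \cite[Lemma 6.3]{P}: it equals $\Oo_L+p^{n-1}R$ inside $R$. Since $f$ is special, $f\in\Oo_LP$, and the condition $f\in\Oo_L+p^{n-1}R$ becomes $f\in p^{n-1}\Oo_LP$, i.e.\ $\ord_p(\deg^\ast(f))\gqq 2n$, i.e.\ $n\lqq\tfrac12\ord_p(m)$. No filtration calculation is carried out at all.

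So your approach would work but is harder: the Grothendieck--Messing computation you outline is essentially the content of \cite[Lemma 6.3]{P}, and the paper simply cites that result. What you gain from the direct route is self-containment; what the paper gains is brevity and a reduction to the one-line algebraic equivalence above. Your parametrization $f=\kappa(\alpha)f_0$ with $\ord_{K_p}(\alpha)=\tfrac12\ord_p(m)-1$ is correct and dovetails with the paper's computation, since $f\in p^{n-1}\Oo_LP$ is precisely $\alpha\in p^{n-1}\Oo_{K,p}$, giving the same maximal $n=\tfrac12\ord_p(m)$.
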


\begin{proof}
Since $p$ is inert in $K$, $\WWW = W$ is the ring of integers in the completion of the maximal unramified extension of $\QQ_p$ (the usual $p$-Witt ring of $\overline{\FF}_p$). Fix a uniformizer $\Pi \in \Delta$ satisfying $\Pi x = x^\iota\Pi$ for all $x \in \Oo_p \subset \Delta$, where $\iota$ is the main involution on $\Delta_{\QQ}$ and
$\Oo_p$ is the image of the CM action $\Oo_{K, p} \to \Delta$ on an elliptic curve $E$ such that 
$A \cong M \otimes_{\Oo_K} E$.
Then there is an isomorphism of $\ZZ_p$-algebras $\End_{\Oo_B}(A) \otimes_{\ZZ} \ZZ_p \cong R$, where
$$
R = \left\{\begin{bmatrix} x & y\Pi \\ py\Pi & x \end{bmatrix} : x, y \in \Oo_p \right\},
$$
so there is a decomposition of left $\Oo_p$-modules $R = \Oo_p \oplus \Oo_pP$, with the first factor embedded diagonally
and
$$
P = \begin{bmatrix} 0 & \Pi \\ p\Pi & 0 \end{bmatrix}.
$$
It follows that $L_p(A, \kappa) = \Oo_pP$ and hence for any integer $n \gqq 1$,
\begin{align*}
f \in \Oo_p + p^{n-1}R &\iff f \in p^{n-1}\Oo_pP \\
&\iff \ord_p(\deg^\ast(f)) \gqq 2n  \\
&\iff \tfrac{1}{2}\ord_p(m)  \gqq n.
\end{align*}

It follows from \cite[Proposition 2.9]{RZ} that the functor $\Def(A, \kappa, f)$ is represented by $W_n = W/(p^n)$ where $n$ is the largest integer such that
$f \in \End_{\Oo_B}(A[p^{\infty}]) \cong R$ lifts to an element of 
$\End_{\Oo_B}(\widetilde{A}[p^{\infty}] \otimes_W W_n)$, where $\widetilde{A}$ is the universal deformation of $(A, i, \kappa)$ to $W$. By \cite[Lemma 6.3]{P},
$$
\End_{\Oo_B}(\widetilde{A}[p^{\infty}] \otimes_W W_n) \cong \Oo_p + p^{n-1}R,
$$
so the result follows from the above calculation since $\text{length}(W/(p^n)) = \text{length}_W(W/(p^n)) = n$.
\end{proof}

\begin{theorem}\label{local ring III}
Suppose $p$ is a prime nonsplit in $K$, let $\frakp \subset \Oo_K$ be the prime over $p$, and let
$m \in \ZZ^+$. For any $y \in \YYY_m(\overline{\FF}_\frakp)$, the strictly Henselian local ring 
$\OOO^{\text{\upshape sh}}_{\YYY_m, y}$ is Artinian of length
$$
\varepsilon_p + e_p\frac{\ord_p(md_K) - \varepsilon_p}{2}.
$$
\end{theorem}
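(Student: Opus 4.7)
The strategy mirrors Proposition \ref{representability VI}. Since $\Def_{\Oo_B}(A, \Oo_K)$ is represented by $\WWW$, the strictly Henselian local ring $\OOO^{\text{sh}}_{\YYY_m, y}$ coincides with the ring representing $\Def(A, \kappa, f)$, and by \cite[Proposition 2.9]{RZ} this ring is $\WWW/\frakm_\WWW^N$ where $N$ is the largest integer such that $f \in \End_{\Oo_B}(A[p^\infty])$ lifts to an endomorphism of $\widetilde{A}[p^\infty] \otimes_\WWW \WWW/\frakm_\WWW^N$. When $p \mid d_B$ one has $(\varepsilon_p, e_p, \ord_p(d_K)) = (0, 1, 0)$, so the claimed length reduces to $\tfrac{1}{2}\ord_p(m)$, which is exactly Proposition \ref{representability VI}.

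When $p \nmid d_B$, the supersingularity of $A$ together with $\Oo_B \otimes \ZZ_p \cong \MM_2(\ZZ_p)$ gives $\End_{\Oo_B}(A[p^\infty]) \cong \Delta$, with $\kappa$ embedding $\Oo_{K, p}$ in $\Delta$. The plan is to adapt the argument of \cite[Lemma 6.3]{P} (the QM analogue of the Gross--Keating theorem on endomorphism lifts of CM elliptic curves) to show that the subring of endomorphisms lifting to level $n$ is $\Oo_{K, p} + \frakm_\WWW^{n-1}\Delta$, where $\frakm_\WWW = p\WWW$ when $p$ is inert in $K$ and $\frakm_\WWW = \pi\WWW$ for a uniformizer $\pi$ of $\Oo_{K, p}$ when $p$ is ramified in $K$. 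Granting this description, Proposition \ref{quadratic IV} lets one write $f = xf_0$ with $x \in \Oo_{K, p}$ and $\ord_p(\deg^\ast(f_0)) = 2 - e_p$.

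In the inert subcase, take $f_0 = \Pi$ to be a uniformizer of $\Delta$; the left $\Oo_{K, p}$-module decomposition $\Delta = \Oo_{K, p} \oplus \Oo_{K, p}\Pi$ reduces $f = x\Pi \in \Oo_{K, p} + p^{n-1}\Delta$ to $x \in p^{n-1}\Oo_{K, p}$, and using $v_p(x) = \tfrac{1}{2}(\ord_p(m) - 1)$ one obtains $N = \tfrac{1}{2}(\ord_p(m) + 1)$. In the ramified subcase, take $\Pi = \kappa(\pi)$ (a uniformizer of $\Delta$) and choose $f_0 \in \ZZ_{p^2}\sm\ZZ_p \subset \Delta$ with $f_0\Pi = -\Pi f_0$; writing $x = a + b\Pi$ with $a, b \in \ZZ_p$ and expanding $xf_0 = af_0 - bf_0\Pi$ in $\Delta = \ZZ_{p^2} \oplus \ZZ_{p^2}\Pi$, the condition $f \in \Oo_{K, p} + \pi^{n-1}\Delta$ splits into two $p$-adic divisibility inequalities on $a$ and $b$ which, together with $v_\pi(x) = \ord_p(m)$, force $N = 1 + \ord_p(m)$ regardless of the parity of $\ord_p(m)$. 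Substituting the values $(e_p, \varepsilon_p, \ord_p(d_K)) = (1, 1, 0)$ or $(2, 1, 1)$ into $\varepsilon_p + e_p(\ord_p(md_K) - \varepsilon_p)/2$ recovers the claimed length in each subcase.

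The main obstacle is establishing the filtration formula $\Oo_{K, p} + \frakm_\WWW^{n-1}\Delta$ when $p \nmid d_B$, particularly in the ramified subcase, where $\WWW$ is no longer a Witt ring and the uniformizer of $\WWW$ coincides via $\kappa$ with a uniformizer of $\Delta$; once this is in hand the unwinding of the two divisibility conditions is routine.
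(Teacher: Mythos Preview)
Your approach is genuinely different from the paper's, and it has a real gap in the ramified subcase.

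The paper does \emph{not} attempt to establish a filtration formula of the shape $\Oo_{K,p} + \frakm_\WWW^{\,n-1}\Delta$ when $p \nmid d_B$. Instead, it uses the idempotents in $\Oo_B \otimes_\ZZ \WWW \cong \MM_2(\WWW)$ to split $A[p^\infty] \cong \frakg \times \frakg$, applies Serre--Tate to identify $\Def(A,\kappa,f)$ with a deformation functor for $\frakg$ together with the action of $\Oo_p[f_0]$, and then realizes this as $\Def(E,\kappa_0,h)$ for an actual supersingular elliptic curve $E$ with CM by $\Oo_K$ and a special endomorphism $h$ of degree $m$. This produces an isomorphism $\widehat{\OOO}^{\text{sh}}_{\YYY_m,y} \cong \widehat{\OOO}^{\text{sh}}_{\ZZZ_m,z}$, after which the length is read off from Gross's theorem (quoted as \cite[Theorem 5.11]{KRY}): $\length = 1 + \ord_p(md_K/p)/f_p$. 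The virtue of this route is that the hard deformation-theoretic input is packaged entirely in Gross's result, and no new filtration computation is needed.

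Your route instead tries to reprove the Gross-type statement directly, and the ramified subcase breaks down at $p=2$. Your choice of $f_0 \in \ZZ_{p^2}\sm\ZZ_p$ with $f_0\Pi = -\Pi f_0$ is a special endomorphism only when $\overline{\pi} = -\pi$, i.e.\ when $K_p/\QQ_p$ is tamely ramified; for $p=2$ wildly ramified (say $K=\QQ(i)$, $\pi = 1+i$, $\overline{\pi}/\pi = -i$) this fails, and the decomposition you use no longer aligns with $L_p(A,\kappa)$. This is not cosmetic: your output $N = 1 + \ord_p(m)$ matches the theorem's value $\ord_p(m) + \ord_p(d_K)$ only when $\ord_p(d_K)=1$, whereas $\ord_2(d_K) \in \{2,3\}$ is possible. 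So even granting the filtration formula you propose, the ``routine unwinding'' you describe does not give the right answer at $p=2$; either the filtration itself must be corrected in the wildly ramified case, or you must go through the reduction to elliptic curves as the paper does and invoke Gross.
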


\begin{proof}
The same proof as in \cite[Proposition 2.25]{Howard} shows that the functor $\Def(A, \kappa, f)$
is represented by the ring $\widehat{\OOO}^{\text{sh}}_{\YYY_m, y}$, where $y = (A, \kappa, f) \in \YYY_m(\overline{\FF}_\frakp)$, so the result for $p \mid d_B$ follows from Proposition 
\ref{representability VI}. The idea for the $p \nmid d_B$ case is to reduce it to the analogous result for elliptic curves as follows.

Fix $y = (A, \kappa, f) \in \YYY_m(\overline{\FF}_\frakp)$ for $p \nmid d_B$. Then the standard
idempotents $\varepsilon, \varepsilon' \in \MM_2(\WWW) \cong \Oo_B \otimes_{\ZZ} \WWW$
induce a splitting
$$
A[p^{\infty}] \cong \varepsilon A[p^{\infty}] \times \varepsilon'A[p^{\infty}] \cong \frakg \times \frakg,
$$
where $\Oo_B$ acts through the natural action of $\MM_2(\WWW)$. Also, if $\Oo_p = \kappa(\Oo_{K, p}) \subset \Delta \cong \End_{\overline{\FF}_\frakp}(\frakg)$, the action of $\Oo_K$ on $A[p^{\infty}]$ is through the diagonal action of $\Oo_p$.
By the Serre-Tate theorem there is an isomorphism of functors
$
\Def(A, \kappa, f) \cong \Def(A[p^{\infty}], \kappa[p^{\infty}], f[p^{\infty}]),
$
where the functor on the right assigns to each $R \in \mathbf{CLN}$ the set of isomorphism classes of deformations of $A[p^{\infty}]$, with its actions of $\Oo_B$ and $\Oo_K$, and the endomorphism $f[p^{\infty}]$, to $R$. As in \cite[Proof of Proposition 5.7]{P}, 
there are isomorphisms
$$
\End_{\Oo_B}(A) \otimes_{\ZZ}\ZZ_p \cong \End_{\Oo_B}(A[p^{\infty}]) \cong \End_{\overline{\FF}_\frakp}(\frakg) \cong \Delta,
$$
identifying the quadratic form $\deg^\ast$ with the reduced norm $\Nrd$ on $\Delta$.
Let $f_0$ be the image of $f$ under these isomorphisms, so $f_0 \in \Delta \sm \Oo_p$, being a special endomorphism. 

Define a functor $\Def(\frakg, \Oo_p[f_0]) : \mathbf{CLN} \to \mathbf{Sets}$ in the obvious way. Then there is a natural isomorphism of functors
$$
\Def(\frakg, \Oo_p[f_0]) \to \Def(A[p^{\infty}], \kappa[p^{\infty}], f[p^{\infty}])
$$
given by $(\frakG, g) \mapsto (\frakG \times \frakG, \diag(g, g))$, where $\frakG$ is a deformation of $\frakg$, together with an $\Oo_p$-action, lifting the action on $\frakg$, and  $g$ is an endomorphism lifting $f_0$. On the right, $\Oo_K$ acts on $\frakG \times \frakG$ diagonally and $\Oo_B$ acts through $\MM_2(\WWW)$. That the above morphism is an isomorphism follows from the fact that both functors are represented by $\WWW_n = \WWW/(\pi^n)$, where $\pi \in \Oo_{K_{\frakp}}$ is a uniformizer, and $n$ is the largest integer such that $f_0$ lifts to an element of 
$$
\End_{\WWW_n}(\widetilde{\frakg} \otimes_{\WWW} \WWW_n) \cong \End_{\Oo_B}(\widetilde{A}[p^{\infty}] \otimes_{\WWW} \WWW_n),
$$
with $\widetilde{\frakg}$ the universal deformation of $\frakg$, with its $\Oo_p$-action, to $\WWW$, and $\widetilde{A}[p^{\infty}]$ the universal deformation of $A[p^{\infty}]$ with its $\Oo_B \otimes_{\ZZ} \Oo_p$-action, to $\WWW$.

Fix an isomorphism $\frakg \cong E[p^{\infty}]$ for some supersingular elliptic curve $E$ over $\overline{\FF}_\frakp$. View $\End_{\Oo_B}(A)$ as an order in $\End^0_{\Oo_B}(A[p^{\infty}]) \cong \Delta_{\QQ} \cong \End^0_{\overline{\FF}_\frakp}(E[p^{\infty}])$ via the natural inclusion (\cite[Lemma 3.2]{Conrad}) and the same for $\End_{\overline{\FF}_\frakp}(E) \hookrightarrow \End^0_{\overline{\FF}_\frakp}(E[p^{\infty}])$. Then 
$\End_{\overline{\FF}_\frakp}(E)$ is a maximal order, and replacing $E$ with an isogenous elliptic curve, we may assume $\End_{\overline{\FF}_\frakp}(E)$ contains $\End_{\Oo_B}(A)$ (\cite[Corollary 42.2.21]{Voight}). Hence, there is an $\Oo_K$-action $\kappa_0$ on $E$ and a special endomorphism $h \in 
\End_{\overline{\FF}_\frakp}(E)$ such that $h$ is sent to $f_0$ under the natural isomorphism
$\End_{\overline{\FF}_\frakp}(E) \otimes_{\ZZ}\ZZ_p \to \Delta$. This isomorphism identifies the quadratic form $\deg$ with $\Nrd$ (\cite[Proof of Lemma 2.11]{Howard}), so we also have $\deg(h) = m$, giving a geometric point $z = (E, \kappa_0, h)  \in \ZZZ_m(\overline{\FF}_{\frakp})$, with $\ZZZ_m$ the stack defined in Section 1.1.

Finally, by the Serre-Tate theorem again, there is a natural isomorphism of functors $$\Def(\frakg, \Oo_p[f_0]) \cong \Def(E, \kappa_0, h).$$ As above, the deformation functor $\Def(E, \kappa_0, h)$ is represented by the ring $\widehat{\OOO}^{\text{sh}}_{\ZZZ_m, z}$. Putting it all together, there is an isomorphism of rings $\widehat{\OOO}^{\text{sh}}_{\YYY_m, y} \cong
\widehat{\OOO}^{\text{sh}}_{\ZZZ_m, z}$, and this case of the theorem follows from a result of 
Gross giving the length of the latter ring (\cite[Theorem 5.11]{KRY}):
\begin{equation*}
\length(\widehat{\OOO}^{\text{sh}}_{\ZZZ_m, z}) = 1 + \frac{\ord_p(md_K/p)}{f_p}. \qedhere
\end{equation*}
\end{proof}

\begin{theorem}\label{final formula II}
Let $m \in \ZZ^+$ and suppose $\Diff_B(m) = \{p\}$. Then
$$
\deg(\YYY_m) = 2^{r+s}\log(p)\cdot \rho(md_B^{-1}p^{(e_p-1)\varepsilon_p - 1})\cdot (\ord_p(md_K) + \varepsilon_pf_p - \varepsilon_p).
$$
If $\#\Diff_B(m) > 1$ then $\deg(\YYY_m) = 0$.
\end{theorem}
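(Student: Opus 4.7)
The strategy is to combine the counting result (Theorem~\ref{geometric points II}) with the uniform local length formula (Theorem~\ref{local ring III}) inside the definition (\ref{arithmetic degree II}) of $\deg(\YYY_m)$. The vanishing claim when $\#\Diff_B(m)>1$ is immediate from the Corollary following the second proposition of Section~2: that Corollary forces $\YYY_m = \varnothing$, so every inner sum in (\ref{arithmetic degree II}) is empty.

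Now assume $\Diff_B(m) = \{p\}$. By the same Corollary, only the unique prime $\frakp \subset \Oo_K$ over $p$ contributes to the outer sum in (\ref{arithmetic degree II}), reducing it to
$$\deg(\YYY_m) = \log(|\FF_\frakp|) \sum_{y \in [\YYY_m(\overline{\FF}_\frakp)]} \length(\OOO^{\text{sh}}_{\YYY_m, y}).$$
Theorem~\ref{local ring III} gives a value for $\length(\OOO^{\text{sh}}_{\YYY_m,y})$ that is independent of $y$, so I would pull it out of the sum, count the remaining geometric points using Theorem~\ref{geometric points II}, and use $\log(|\FF_\frakp|) = f_p\log(p)$ to arrive at
$$\deg(\YYY_m) = 2^{r+s}f_p\log(p)\cdot R(md_B^{-1}p^{(e_p-1)\varepsilon_p - 1})\cdot\left(\varepsilon_p + e_p\frac{\ord_p(md_K)-\varepsilon_p}{2}\right).$$

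The only remaining step is the numerical identity
$$f_p\!\left(\varepsilon_p + e_p\frac{\ord_p(md_K)-\varepsilon_p}{2}\right) = \ord_p(md_K) + \varepsilon_p f_p - \varepsilon_p,$$
which is equivalent to $e_p f_p = 2$. This holds because $\Diff_B(m) = \{p\}$ forces $p$ to be nonsplit in $K$ (observed in Section~2 just after the definition of $\Diff_B$), and a nonsplit prime in the quadratic extension $K/\QQ$ satisfies $e_p f_p = [K_\frakp : \QQ_p] = 2$. There is no real obstacle at this stage; the substantive work was carried out in Theorems~\ref{geometric points II} and~\ref{local ring III} and in the Corollary, and the final formula is a clean assembly of those three ingredients together with the split/inert/ramified dichotomy for $K/\QQ$.
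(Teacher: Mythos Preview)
Your proposal is correct and follows essentially the same route as the paper: reduce the outer sum in (\ref{arithmetic degree II}) to the single prime $\frakp$ via the Corollary, factor out the uniform length from Theorem~\ref{local ring III}, insert the point count from Theorem~\ref{geometric points II}, and simplify using $e_pf_p=2$. In fact your write-up is slightly more explicit than the paper's, which silently absorbs the $e_pf_p=2$ step into the final equality.
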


\begin{proof}
Let $\frakp \subset \Oo_K$ be the prime over $p$. Since $\YYY_m(\overline{\FF}_\frakq) = \varnothing$
for all primes $\frakq \neq \frakp$, for any $y \in \YYY_m(\overline{\FF}_\frakp)$,
\begin{align*}
\deg(\YYY_m) &= \log(|\FF_\frakp|)\cdot\#[\YYY_m(\overline{\FF}_\frakp)]\cdot \text{length}(\OOO^{\text{sh}}_{\YYY_m, y}) \\
&= f_p\cdot \log(p)\cdot 2^{r+s}\rho(md_B^{-1}p^{(e_p-1)\varepsilon_p - 1})\cdot \left(\varepsilon_p + e_p\frac{\ord_p(md_K) - \varepsilon_p}{2}\right)  \\
&= 2^{r+s}\log(p)\cdot \rho(md_B^{-1}p^{(e_p-1)\varepsilon_p - 1})\cdot (\ord_p(md_K) + \varepsilon_pf_p - \varepsilon_p) 
\end{align*}
by Theorems \ref{geometric points II} and \ref{local ring III}. If $\#\Diff_B(m) > 1$ then $\YYY_m = \varnothing$.
\end{proof}

\end{document}